\def\NZQ{\Bbb}               
\def\ZZ{{\NZQ Z}}
\def\frk{\frak}               
\def\Phi{{\frk n}}
\def\Phi{{\frk N}}
\def\opn#1#2{\def#1{\operatorname{#2}}} 
\opn\chara{char} \opn\length{\ell} \opn\pd{pd} \opn\rk{rk}
\opn\projdim{proj\,dim} \opn\injdim{inj\,dim} \opn\rank{rank}
\opn\depth{depth} \opn\grade{grade} \opn\height{height}
\opn\embdim{emb\,dim} \opn\codim{codim}
\opn\Tr{Tr} \opn\bigrank{big\,rank}
\opn\superheight{superheight}\opn\lcm{lcm}
\opn\trdeg{tr\,deg}
\opn\reg{reg} \opn\lreg{lreg} \opn\ini{in} \opn\lpd{lpd}
\opn\size{size}
\opn\div{div} \opn\Div{Div} \opn\cl{cl} \opn\Cl{Cl}
\opn\Spec{Spec} \opn\Supp{Supp} \opn\supp{supp} \opn\Sing{Sing}
\opn\Ass{Ass} \opn\Min{Min}
\opn\Ann{Ann} \opn\Rad{Rad} \opn\Soc{Soc}
\opn\Im{Im} \opn\Ker{Ker} \opn\Coker{Coker} \opn\Am{Am}
\opn\Hom{Hom} \opn\Tor{Tor} \opn\Ext{Ext} \opn\End{End}
\opn\Aut{Aut} \opn\id{id}
\opn\nat{nat}
\opn\pff{pf}
\opn\Pf{Pf} \opn\GL{GL} \opn\SL{SL} \opn\mod{mod} \opn\ord{ord}
\opn\Gin{Gin} \opn\Hilb{Hilb}\opn\sdepth{sdepth}
\opn\aff{aff} \opn\con{conv} \opn\relint{relint} \opn\st{st}
\opn\lk{lk} \opn\cn{cn} \opn\core{core} \opn\vol{vol}
\opn\link{link} \opn\star{star}
\opn\gr{gr}
\def\pot#1#2{#1[\kern-0.28ex[#2]\kern-0.28ex]}
\opn\dirlim{\underrightarrow{\lim}}
\opn\inivlim{\underleftarrow{\lim}}
\let\Dirsum=\bigoplus
\let\to=\rightarrow
\def\Implies{\ifmmode\Longrightarrow \else
        \unskip${}\Longrightarrow{}$\ignorespaces\fi}
\def\implies{\ifmmode\Rightarrow \else
        \unskip${}\Rightarrow{}$\ignorespaces\fi}
\def\iff{\ifmmode\Longleftrightarrow \else
        \unskip${}\Longleftrightarrow{}$\ignorespaces\fi}
\newtheorem{Theorem}{Theorem}[section]
\newtheorem{Lemma}[Theorem]{Lemma}
\newtheorem{Corollary}[Theorem]{Corollary}
\newtheorem{Proposition}[Theorem]{Proposition}
\newtheorem{Remark}[Theorem]{Remark}
\newtheorem{Example}[Theorem]{Example}
\let\epsilon\varepsilon
\let\phi=\varphi
\let\kappa=\varkappa
\def\qed{\ifhmode\textqed\fi
      \ifmmode\ifinner\quad\qedsymbol\else\dispqed\fi\fi}
\def\textqed{\unskip\nobreak\penalty50
       \hskip2em\hbox{}\nobreak\hfil\qedsymbol
       \parfillskip=0pt \finalhyphendemerits=0}
\def\dispqed{\rlap{\qquad\qedsymbol}}
\opn\dis{dis}
\def\pnt{{\raise0.5mm\hbox{\large\bf.}}}
\opn\Lex{Lex}
\begin{document}

\title{An inequality between depth and Stanley depth}

\author{Dorin Popescu}
\thanks{The  Support from  the Romanian Ministry of Education, Research,
and Inovation (PN II Program, CNCSIS  ID-1903/2008) is gratefully acknowledged.}
\subjclass{Primary 13H10, Secondary
13P10, 13C14, 13F20}
\keywords{Depth, Stanley
decompositions, Stanley depth}
\address{Institute of Mathematics "Simion Stoilow", University of Bucharest, P.O. Box 1-764,
Bucharest 014700, Romania}
 \email{dorin.popescu@imar.ro}

\begin{abstract}
We show that Stanley's Conjecture holds for square free monomial
ideals in five variables, that is the Stanley depth of a square free
monomial ideal in five variables is greater or equal with its depth.
\end{abstract}

\maketitle

\section*{Introduction}
Let  $S=K[x_1,\ldots,x_n]$ be  a polynomial ring in $n$ variables
over a field $K$ and $M$  a finitely generated multigraded (i.e.
$\ZZ^n$-graded) $S$-module. Given $m\in M$  a homogeneous element in
$M$ and $Z\subseteq \{x_1,\ldots,x_n\}$, let $mK[Z]\subset M$ be the
linear $K$-subspace of all elements of the form $mf$, $f\in K[Z]$.
This subspace is called Stanley space of dimension $|Z|$, if $mK[Z]$
is a free $K[Z]$-module. A Stanley decomposition of $M$ is a
presentation of the  $K$-vector space $M$ as a finite direct sum of
Stanley spaces $\mathcal{D}:\,\,M=\Dirsum_{i=1}^rm_iK[Z_i]$. Set
$\sdepth \mathcal{D}=\min\{|Z_i|:i=1,\ldots,r\}$. The number
\[
\sdepth(M):=\max\{\sdepth({\mathcal D}):\; {\mathcal D}\; \text{is a
Stanley decomposition of}\;  M \}
\]
is called Stanley depth of $M$.  R. Stanley \cite[Conjecture
5.1]{St} gave the following conjecture.

 {\bf Stanley's Conjecture} $\sdepth(M)\geq \depth(M)$
  for all finitely generated $\ZZ^n$-graded $S$-modules $M$.

    Our Theorem \ref{p}, completely based on \cite{Po}, shows that the above  conjecture
    holds when $\dim_S\ M\leq 2$. If $n\leq 5$ Stanley's Conjecture
    holds for all cyclic $S$-modules by \cite{AP} and \cite[Theorem 4.3]{Po}.

It is the purpose of our paper to study Stanley's Conjecture on
monomial square free ideals of $S$, that is:

{\bf Weak Conjecture} Let $I\subset S$ be a monomial square free
ideal. Then $\sdepth_S\ I\geq \depth_S\ I$.

Our Theorem \ref{main} gives a kind of inductive step in proving the
above conjecture, which is settled for $n\leq 5$ in our Theorem
\ref{5}. Note that the above conjecture says in fact that
$\sdepth_S\ I\geq 1+\depth_S\ S/I$ for any monomial  square free
ideal $I$ of $S$. This remind us a question raised in \cite{Ra},
saying that $\sdepth_S\ I\geq 1+\sdepth_S\ S/I$ for any monomial
ideal $I$ of $S$. This question is harder since there exist few
known properties of Stanley depth (see \cite{HVZ}, \cite{As},
\cite{Na}, \cite{Ra}), which is not the case of the usual depth (see
\cite{BH}, \cite{Vi}). A positive answer of this question in the frame of monomial square free ideals
would state the Weak Conjecture as follows: $$\sdepth_S\ I\geq 1+\sdepth_S\ S/I\geq 1+\depth_S\ S/I=\depth_S\ I,$$
 the second inequality being a consequence of \cite[Theorem 4.3]{Po}, or of our Theorem \ref{p}.

\section{Some inequalities on depth and Stanley depth}

Let $S=K[x_1,\ldots,x_{n}]$ be a polynomial ring over a field $K$,
$I\subset S$  a monomial ideal. A. Rauf stated in \cite{Ra} the
following  two results:

\begin{Proposition}
\label{a1} $\depth_S\ S/(I,x_n)\geq \depth_S\ S/I-1.$
\end{Proposition}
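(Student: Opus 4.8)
The plan is to compare a short exact sequence linking $S/I$, $S/(I,x_n)$, and $S/(I:x_n)$, and then feed it into the depth lemma. Write $J=(I:x_n)$. Multiplication by $x_n$ gives the exact sequence
\[
0\to S/J \xrightarrow{\;\cdot x_n\;} S/I \to S/(I,x_n)\to 0,
\]
since the kernel of multiplication by $x_n$ on $S/I$ is exactly $(I:x_n)/I$ and the cokernel is $S/(I,x_n)$. Now apply the standard ``depth lemma'' for short exact sequences $0\to A\to B\to C\to 0$, in the form $\depth C\ge \min\{\depth A-1,\depth B\}$. With $A=S/J$, $B=S/I$, $C=S/(I,x_n)$ this reads
\[
\depth_S S/(I,x_n)\ \ge\ \min\{\depth_S S/J-1,\ \depth_S S/I\}.
\]

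To conclude, I would show $\depth_S S/J\ge \depth_S S/I$. Here one uses that $x_n$ is a nonzerodivisor on $S/J$: indeed $(J:x_n)=((I:x_n):x_n)=(I:x_n^2)=I:x_n=J$ because $I$ is a monomial ideal, so $x_n\notin \bigcup_{\pp\in\Ass(S/J)}\pp$ (the associated primes being monomial). Thus $\depth_S S/J\ge 1$, and in fact one can pass to $\bar S=S/(x_n)=K[x_1,\dots,x_{n-1}]$ where $\depth_{\bar S}\bar S/\bar J=\depth_S S/J$ and relate this back to $S/I$; alternatively, and more cleanly, note that $J\supseteq I$ with $J/I$ annihilated by $x_n$, and localize or use that every monomial ideal containing $I$ and contracted from $\bar S$ has depth at least that of $S/I$. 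Either way, substituting $\depth_S S/J\ge \depth_S S/I$ into the displayed inequality yields
\[
\depth_S S/(I,x_n)\ \ge\ \depth_S S/I-1,
\]
which is the claim.

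The main obstacle is the inequality $\depth_S S/(I:x_n)\ge\depth_S S/I$: the short exact sequence alone only gives a lower bound involving $\depth S/(I,x_n)$, which is circular. The honest route is to exploit that $x_n$ is regular on $S/(I:x_n)$ and to invoke the independence of depth under the polynomial extension by $x_n$, i.e. work over $K[x_1,\dots,x_{n-1}]$ with the ideal obtained by setting $x_n=1$ in the generators of $I$, for which the corresponding comparison is visible from a Stanley/primary decomposition or from $\Tor$ long exact sequences. I would carry the argument out in that order: (1) write the exact sequence; (2) apply the depth lemma; (3) prove regularity of $x_n$ on $S/(I:x_n)$ using that $I$ is monomial; (4) deduce $\depth S/(I:x_n)\ge\depth S/I$ and substitute.
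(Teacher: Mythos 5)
The paper does not actually prove this Proposition: it is quoted from Rauf \cite{Ra}, so there is no internal proof to compare against. Judged on its own, your argument has a genuine gap. The exact sequence $0\to S/(I:x_n)\xrightarrow{x_n} S/I\to S/(I,x_n)\to 0$ and the Depth Lemma correctly reduce the claim to the inequality $\depth_S S/(I:x_n)\geq \depth_S S/I$ --- but that inequality is precisely Corollary \ref{a2}, which in the paper (and in \cite{Ra}) is \emph{deduced from} Proposition \ref{a1} by applying the Depth Lemma to the very same exact sequence in the other direction ($\depth A\geq\min\{\depth B,\depth C+1\}$). So your reduction goes in a circle unless you supply an independent proof of one of the two statements, and you do not: you explicitly flag the obstacle, and then outline a resolution that does not work.

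Concretely, the proposed step (3) is false for general monomial ideals: $(I:x_n):x_n=(I:x_n^2)$ need not equal $(I:x_n)$ (take $I=(x_n^2)$, where $(I:x_n)=(x_n)$ and $(I:x_n^2)=S$), and the Proposition is stated for arbitrary monomial ideals, not only square free ones. Even where $x_n$ is regular on $S/(I:x_n)$, that only yields $\depth_S S/(I:x_n)\geq 1$, which is far from $\depth_S S/(I:x_n)\geq\depth_S S/I$; and the fallback assertion that every monomial ideal containing $I$ and contracted from $K[x_1,\dots,x_{n-1}]$ has depth at least $\depth_S S/I$ is simply not true (already $I=0$ gives counterexamples). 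What is actually needed is a direct argument for Proposition \ref{a1} itself, e.g.\ induction on the largest power of $x_n$ occurring in the minimal generators of $I$ (the base case being that $x_n$ is regular on $S/I$), after which Corollary \ref{a2} follows by the Depth Lemma as above. As written, steps (3) and (4) of your plan are the whole content of the Proposition and remain unproved.
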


\begin{Corollary} \label{a2} $\depth_S\ S/(I:v)\geq \depth_S\ S/I$
for each monomial $v\not \in I$.
\end{Corollary}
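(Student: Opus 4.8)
The plan is to deduce Corollary~\ref{a2} from Proposition~\ref{a1} by induction on $\deg v$, after first handling the special case in which $v$ is a single variable. If $\deg v = 0$ then $v$ is a unit, $(I:v) = I$, and the inequality is trivial; this is the base case.

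First I would treat $v = x_j$ for a variable $x_j$; by symmetry of the variables we may assume $j = n$. Multiplication by $x_n$ is injective on $S/(I:x_n)$, with image $((x_n)+I)/I$, so there is a short exact sequence of $\ZZ^n$-graded $S$-modules
\[
0 \To \bigl(S/(I:x_n)\bigr)(-1) \xrightarrow{x_n} S/I \To S/(I,x_n) \To 0,
\]
the degree shift being immaterial for depth. The Depth Lemma applied to this sequence gives $\depth_S S/(I:x_n) \geq \min\{\depth_S S/I,\ \depth_S S/(I,x_n)+1\}$. By Proposition~\ref{a1}, $\depth_S S/(I,x_n)+1 \geq \depth_S S/I$, so the right-hand side equals $\depth_S S/I$, which settles the variable case.

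For the inductive step, suppose $\deg v \geq 1$ and write $v = x_j w$ with $\deg w = \deg v - 1$. Since $v \notin I$ and $I$ is an ideal, also $w \notin I$, so $I' := (I:w)$ is again a proper monomial ideal, and $(I:v) = (I':x_j)$. Applying the already-established variable case to $I'$ and then the induction hypothesis to $w$, I obtain
\[
\depth_S S/(I:v) = \depth_S S/(I':x_j) \geq \depth_S S/I' = \depth_S S/(I:w) \geq \depth_S S/I,
\]
which completes the induction.

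I do not anticipate a genuine obstacle: all the real content sits in Proposition~\ref{a1}. The only points that need a word are that Proposition~\ref{a1}, although stated for $x_n$, holds for any variable after relabelling; that $(I:w)$ is still a proper monomial ideal, so the variable case genuinely applies to it; and the standard Depth Lemma inequality $\depth U \geq \min\{\depth M,\ \depth N + 1\}$ for a short exact sequence $0 \to U \to M \to N \to 0$ of finitely generated modules.
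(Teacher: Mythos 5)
Your proof is correct. The paper itself gives no argument for Corollary~\ref{a2} (both it and Proposition~\ref{a1} are quoted from \cite{Ra}), but your derivation --- the exact sequence $0\to S/(I:x_n)\xrightarrow{x_n} S/I\to S/(I,x_n)\to 0$, the Depth Lemma, Proposition~\ref{a1}, and induction on $\deg v$ via $(I:x_jw)=((I:w):x_j)$ --- is exactly the standard way the corollary follows from the proposition, and all the side conditions you flag ($w\notin I$, $x_j\notin (I:w)$) check out.
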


It is worth to mention that these results hold only in monomial
frame. One could think about  similar questions on Stanley depth. An
analog of the above proposition in the frame of Stanley depth is
given by \cite{Ra}. The following proposition can be seen as a
possible analog of the above corollary.

\begin{Proposition} \label{a3} $\sdepth_S\ (I:v)\geq \sdepth_S\ I$
for each monomial $v\not \in I$.
\end{Proposition}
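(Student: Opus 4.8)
The plan is to build a Stanley decomposition of $(I:v)$ directly from a given one of $I$, using the fact that multiplication (on the level of $K$-vector spaces) by the monomial $v$ gives a useful correspondence between monomials of $(I:v)$ and monomials of $I$. First I would fix an optimal Stanley decomposition $\mathcal{D}\colon I=\Dirsum_{i=1}^r u_iK[Z_i]$ with $\sdepth\mathcal{D}=\sdepth_S I$, where we may take each $u_i$ to be a monomial. The key observation is that if $w$ is a monomial of $(I:v)$, then $wv\in I$, so $wv$ lies in a unique Stanley space $u_iK[Z_i]$; this assigns to $w$ an index $i(w)$. I would then group the monomials of $(I:v)$ according to this index and show that each group is (a finite union of) Stanley spaces over a set of variables of size at least $|Z_i|$.

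The heart of the matter is the following local claim: fix $i$, and consider the set $T_i$ of all monomials $w$ with $wv\in u_iK[Z_i]$, i.e. $wv=u_i\cdot(\text{monomial in }K[Z_i])$. Writing $v=v'v''$ where $v'$ collects the variables in $Z_i$ and $v''$ the variables outside $Z_i$, one sees that $w$ must be divisible by the monomial $u_i/\gcd(u_i,v')\cdot(\text{the }Z_i^c\text{-part forced by }v'')$, and beyond that forced part $w$ ranges freely over $K[Z_i]$. Concretely, I expect $T_i$ to be expressible as $w_i K[Z_i]$ for a single monomial $w_i$ (when it is nonempty), or in the worst case a finite disjoint union of such spaces, each over the same variable set $Z_i$. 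In either case the $K[Z_i]$-freeness is inherited, and $|Z_i|\geq\sdepth_S I$. Taking the direct sum over $i$ of these pieces — which partition the monomials of $(I:v)$ because the $u_iK[Z_i]$ partition the monomials of $I$ and $w\mapsto wv$ is injective on monomials — yields a Stanley decomposition of $(I:v)$ with $\sdepth$ at least $\sdepth_S I$, hence $\sdepth_S(I:v)\geq\sdepth_S I$.

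The main obstacle I anticipate is verifying that each $T_i$ really is a finite disjoint union of Stanley spaces over $Z_i$, rather than something with a smaller underlying variable set or with overlaps. The subtlety is entirely in how the variables of $v$ outside $Z_i$ interact with $u_i$: if $v''$ demands a power of some $x_j\notin Z_i$ that is incompatible with the exponent of $x_j$ in $u_i$, then $T_i$ is empty; otherwise the exponent of each such $x_j$ in $w$ is pinned to exactly one value, which keeps the free part supported on $Z_i$. Once this bookkeeping is done carefully, disjointness and the dimension bound are immediate, and the proof closes. A secondary technical point is to confirm that every monomial of $(I:v)$ is caught by exactly one $T_i$; this follows because $(I:v)$ is a monomial ideal and $wv\in I$ lands in exactly one summand of $\mathcal{D}$.
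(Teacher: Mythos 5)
Your proposal is correct and is essentially the paper's own argument: both pull back the optimal Stanley decomposition $\mathcal{D}$ of $I$ along multiplication by $v$, and when $v$ is a single variable your pieces $T_i$ specialize exactly to the paper's two families $(u_i/x_n)K[Z_i]$ (for $x_n\mid u_i$) and $u_jK[Z_j]$ (for $u_j\notin(x_n)$, $x_n\in Z_j$). The only real difference is that the paper first reduces by recurrence to the case $v=x_n$, whereas you handle a general monomial $v$ in one step; your hedge about ``a finite disjoint union'' is unnecessary, since the exponents outside $Z_i$ are pinned and those inside are only bounded below, so each nonempty $T_i$ is a single Stanley space $w_iK[Z_i]$.
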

\begin{proof} By recurrence it is enough to consider the case when
$v$ is a variable, let us say $v=x_n$. Let
$\mathcal{D}:\,\,I=\oplus_{i=1}^ru_iK[Z_i]$ be a Stanley
decomposition of $I$ such that $\sdepth\ \mathcal{D}=\sdepth_S\ I$.
We will show that
$$\mathcal{D}':\,\,(I:x_n)=(\oplus_{x_n|u_i}\ (u_i/x_n)K[Z_i])\oplus(\oplus_{u_j \not \in (x_n), x_n\in Z_j}\
 u_jK[Z_j])$$
is a Stanley decomposition of $(I:x_n)$. Indeed, if $a$ is a
monomial such that $x_na\in I$ then we have $x_na=u_iw_i$ for some
$i$ and a monomial $w_i$ of $K[Z_i]$. If $x_n\not |u_i$ then
$x_n|w_i$ and so $x_n\in Z_i$. If $x_n|u_i$ then $a=(u_i/x_n)w_i$,
which shows that
$$(I:x_n)=(\Sigma_{x_n|u_i}\ (u_i/x_n)K[Z_i])+(\Sigma_{u_j \not \in (x_n), x_n\in Z_j}\
 u_jK[Z_j]).$$
Remains to show that the above sum is direct. If $x_n |u_i$,
$u_j\not \in (x_n)$, $x_n\in Z_j$ and $u_jw_j=(u_i/x_n)w_i$ for some
monomials $w_j\in K[Z_j]$, $w_i\in K[Z_i]$ then $u_j(x_nw_j)=u_iw_i$
belongs to $u_iK[Z_i]\cap u_jK[Z_j]$, which is not possible.

Thus $\mathcal{D}'$ is a Stanley decomposition of $(I:x_n)$ with
$\sdepth\  \mathcal{D}'\geq \sdepth\ \mathcal{D}=\sdepth_S\ I$,
which ends the proof.
\end{proof}

Next we present two easy lemmas necessary in the next section:
\begin{Lemma} \label{h1} Let $I\subset J$, $I\not =J$ be some monomial ideals of
$S'=K[x_1,\ldots,x_{n-1}] $. Then
$$\sdepth_S\ JS/x_nIS\geq \min\{\sdepth_S\ JS/IS, \sdepth_{S'}\
I\}.$$
\end{Lemma}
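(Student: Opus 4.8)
The plan is to build a Stanley decomposition of $JS/x_nIS$ by combining Stanley decompositions of the two "pieces" that $JS/x_nIS$ naturally splits into as a $K$-vector space. First I would observe that every monomial of $JS$ that does not lie in $x_nIS$ is of exactly one of two types: either it is a monomial $w$ of $JS$ with $x_n\nmid w$ (equivalently $w\in JS$ viewed through the subring $S'$, but allowed to have no $x_n$-factor), or it is $x_n^k v$ with $k\ge 1$ and $v$ a monomial of $S$ such that $x_n^{k-1}v\notin IS$; the latter condition, after dividing out the top power of $x_n$, says the "$S'$-part" of $v$ is a monomial of $JS$ but not of $IS$. So as a $K$-vector space
\[
JS/x_nIS \;=\; \bigl(\text{monomials of } JS \text{ with no } x_n\bigr)\;\oplus\;x_n\cdot\bigl(JS/IS\bigr),
\]
where on the right $JS/IS$ is regarded inside $S$ (so its monomials may involve $x_n$ freely). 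The first summand is, as a $K$-vector space, a copy of the $S'$-module $J$ (it is $JS'$), and the second is a shift by $x_n$ of $JS/IS$.

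**Assembling the decomposition.** Next I would take a Stanley decomposition $\mathcal D_1\colon J=\bigoplus_i u_iK[Z_i]$ of $J$ over $S'$ with $\sdepth\mathcal D_1=\sdepth_{S'}I$ — wait, rather with $\sdepth\mathcal D_1=\sdepth_{S'}J\ge\ldots$; in any case I want the bound $\sdepth_{S'}I$ available, so I would instead invoke a Stanley decomposition realizing $\sdepth_{S'}I$ is not quite what is needed. The clean choice: pick $\mathcal D_1$ a Stanley decomposition of the $S'$-module $J$ with $\sdepth\mathcal D_1$ as large as possible; since $I\subsetneq J$ one has $\sdepth_{S'}J\ge$ nothing automatic, so the honest move is to use that $JS/IS$ and $J$ are related — actually the simplest route is: choose a Stanley decomposition $\mathcal D$ of the $S$-module $JS/IS$ with $\sdepth\mathcal D=\sdepth_S(JS/IS)$, and separately a Stanley decomposition $\mathcal D_1$ of the $S'$-module $I$ with $\sdepth\mathcal D_1=\sdepth_{S'}I$. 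Because $I\subsetneq J$, the nonzero classes of the generators show $JS'/0$ contains $JS'$, and one extracts from the inclusion $I\subset J$ a Stanley decomposition of the $S'$-vector-space complement pieces; concretely, from $\mathcal D_1\colon I=\bigoplus_j v_jK[W_j]$ one does not directly get $J$, so instead I would decompose $JS'$ freshly as an $S'$-module and note each Stanley space has dimension $\ge\sdepth_{S'}J$. To actually bring in $\sdepth_{S'}I$ rather than $\sdepth_{S'}J$, the trick (as in \cite{Ra}, \cite{Po}) is to write $J = I \oplus (\text{complement})$ as $K$-spaces and decompose the complement — but monomials outside $I$ inside $J$ form exactly $J/I$, and any Stanley decomposition of the $S'$-module $J/I$ plus one of $I$ assembles to one of $J$; conversely given $\mathcal D$ of $JS/IS$ over $S$, restricting to no-$x_n$ monomials gives a Stanley decomposition of $J/I$ over $S'$ with the same or larger $\sdepth$. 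So: take $\mathcal D$ realizing $\sdepth_S(JS/IS)$, restrict to get $\mathcal D_0$ on $J/I$ over $S'$, combine $\mathcal D_0$ with a chosen decomposition of $I$ over $S'$ realizing $\sdepth_{S'}I$ to get a decomposition $\mathcal D_1$ of $J$ over $S'$ with $\sdepth\mathcal D_1\ge\min\{\sdepth_S(JS/IS),\,\sdepth_{S'}I\}$. Then declare
\[
\mathcal D'\colon\quad JS/x_nIS \;=\; \Bigl(\bigoplus_{u_iK[Z_i]\in\mathcal D_1} u_iK[Z_i]\Bigr)\;\oplus\;\Bigl(\bigoplus_{mK[Y]\in\, x_n\mathcal D} m\,K[Y\cup\{x_n\}]\Bigr),
\]
where $x_n\mathcal D$ means each Stanley space $mK[Y]$ of $\mathcal D$ (a decomposition of $JS/IS$ over $S$) is replaced by $x_nm\,K[Y\cup\{x_n\}]$ if $x_n\notin Y$, or $x_nm\,K[Y]$ if $x_n\in Y$.

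**Checking it is a Stanley decomposition.** I would verify (i) directness and exhaustiveness: a monomial of $JS\setminus x_nIS$ with $x_n\nmid w$ lands uniquely in the first block via $\mathcal D_1$; a monomial $x_n^k v$ with $k\ge1$ and $x_n^{k-1}v\notin IS$ maps, after stripping one $x_n$, to a unique Stanley space of $\mathcal D$, and the "$\cup\{x_n\}$" restores the full $x_n$-power range — here one checks that $x_n^{k-1}v\in JS/IS$ exactly when the image is nonzero, which is where the hypothesis $I\subsetneq J$ (so $JS/IS\ne 0$) and the definition of colon-type quotient are used. (ii) Each Stanley space in the first block has dimension $|Z_i|\ge\sdepth\mathcal D_1\ge\min\{\cdots\}$, and each in the second block has dimension $|Y\cup\{x_n\}|\ge 1+|Y|\ge 1+\sdepth\mathcal D\ge 1+\sdepth_S(JS/IS)$, which is certainly $\ge\min\{\sdepth_S JS/IS,\sdepth_{S'}I\}$. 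Hence $\sdepth\mathcal D'\ge\min\{\sdepth_S JS/IS,\,\sdepth_{S'}I\}$, giving the claim.

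**Main obstacle.** The routine parts are the bookkeeping in the direct-sum decomposition. The genuinely delicate point — and the one I would spend the most care on — is the passage that produces the bound $\sdepth_{S'}I$ (rather than merely $\sdepth_{S'}J$) in the first block: one must argue that a Stanley decomposition of $JS/IS$ over $S$ restricts cleanly to the $x_n$-free monomials to give a decomposition of the $S'$-module $J/I$ without losing $\sdepth$, and then that gluing this to an optimal decomposition of $I$ yields a legitimate decomposition of $J$ over $S'$. Verifying directness after this gluing — that no monomial of $J$ is represented both in the part coming from $I$ and in the part coming from $J/I$ — is the crux, and it rests on the elementary fact that a Stanley space $vK[W]\subseteq I$ and a Stanley space $u_iK[Z_i]$ whose generator $u_i\notin I$ cannot share a monomial, since any multiple of $v$ by a monomial lies in $I$ while $u_i$ does not. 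Once that is in hand, the rest is formal.
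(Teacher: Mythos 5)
Your overall strategy --- split the monomials of $JS\setminus x_nIS$ into two disjoint pieces and decompose each separately --- is sound, and it is essentially the paper's strategy too; but the particular splitting you chose cannot deliver the stated bound. The fatal point is the first block. You put \emph{all} $x_n$-free monomials of $JS$ (that is, the whole $S'$-ideal $J$) into a block that must be decomposed over $S'$ (its Stanley spaces cannot use $x_n$, or they would meet your second block), and you claim this block can be given Stanley spaces of dimension $\geq\min\{\sdepth_S JS/IS,\ \sdepth_{S'}I\}$. Your route to this is the assertion that restricting a Stanley decomposition of $JS/IS$ to the $x_n$-free monomials preserves the Stanley depth; this is false, since a space $mK[Y]$ with $x_n\nmid m$ and $x_n\in Y$ restricts to $mK[Y\setminus\{x_n\}]$ of dimension $|Y|-1$. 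In fact $\sdepth_S(JS/IS)=\sdepth_{S'}(J/I)+1$ by \cite[Lemma 3.6]{HVZ}, so the loss of one is structural, and your first block only guarantees $\min\{\sdepth_S(JS/IS)-1,\ \sdepth_{S'}I\}$. The defect is not repairable within your splitting: take $S'=K[x_1,x_2]$, $J=(x_1,x_2)$, $I=(x_1x_2)$. Then $\sdepth_{S'}I=2$ and $\sdepth_S JS/IS=2$ (via $x_1K[x_1,x_3]\oplus x_2K[x_2,x_3]$), so the lemma asserts a bound of $2$; but your first block is the full ideal $(x_1,x_2)$ of $K[x_1,x_2]$, whose Stanley depth is $1$, so no decomposition of that block reaches $2$.

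A second, smaller problem: your second block is not a direct sum as written. If $x_n\notin Y_i$ for some space $m_iK[Y_i]$ of $\mathcal D$, then $x_nm_i$ is again a monomial of $JS\setminus IS$ and hence lies in some $m_kK[Y_k]$ with $k\neq i$ (it cannot lie in $m_iK[Y_i]$ since $x_n\notin Y_i$); then $x_n^2m_i$ belongs to both $x_nm_iK[Y_i\cup\{x_n\}]$ and $x_nm_kK[Y_k\cup\{x_n\}]$. Since the monomials of $JS\setminus IS$ are already closed under multiplication by $x_n$, the correct shift is simply $m_iK[Y_i]\mapsto x_nm_iK[Y_i]$, with no enlargement of $Y_i$. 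Both defects vanish if you regroup the pieces as the paper does: the filtration $x_nIS\subset IS\subset JS$ gives $JS/x_nIS\cong JS/IS\oplus IS/x_nIS$ as multigraded $K$-spaces, where the first summand is decomposed over all of $S$ (so the $x_n$-free monomials of $J\setminus I$ stay with their $x_n$-multiples and nothing is lost), and the second is identified with $I$ because the monomials of $IS$ not in $x_nIS$ are exactly the $x_n$-free ones; this yields precisely $\min\{\sdepth_S JS/IS,\ \sdepth_{S'}I\}$.
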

\begin{proof} From the filtration $x_nIS\subset IS\subset JS$ we
get an isomorphism of linear $K$-spaces $JS/x_nIS\cong JS/IS\oplus
IS/x_nIS$. It follows that
$$\sdepth_S\ JS/x_nIS\geq \min\{\sdepth_S\ JS/IS, \sdepth_{S}\
IS/x_nIS\}.$$ To end note that the inclusion $I\subset IS$ induces
an isomorphism of linear $K$-spaces $I\cong IS/x_nIS$, which shows
that $\sdepth_{S'}\ I=\sdepth_S\  IS/x_nIS$.
\end{proof}

\begin{Lemma}\label{h2} Let $I\subset J$, $I\not =J$ be some monomial ideals of
$S'=K[x_1,\ldots,x_{n-1}] $ and $T=(I+x_nJ)S$. Then

\begin{enumerate}
\item{} $$\sdepth\ T\geq \min\{\sdepth_{S'}\ I,\sdepth_S\ JS\},$$
\item{} $$\sdepth\ T\geq \min\{\sdepth_{S}\ JS/IS,\sdepth_S\ IS\}.$$
\end{enumerate}
\end{Lemma}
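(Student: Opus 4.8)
The ideal $T=(I+x_nJ)S$ decomposes, as a $K$-vector space, according to whether a monomial involves $x_n$ or not. Concretely, a monomial of $T$ lies in $IS$ (the part with any $x_n$-power times a monomial of $I$) or is of the form $x_n\cdot(\text{monomial of }J$ not already in $I)$ times a power of $x_n$. This suggests the direct sum decomposition of $K$-spaces
\[
T \;=\; IS \;\oplus\; x_nJS/(x_nJS\cap IS)\,,
\]
but it is cleaner to write $T = IS \oplus \bigl(x_n(J/I)\bigr)S$ in the sense that $T/IS \cong x_n(JS/IS) \cong (JS/IS)$ as $\ZZ^n$-graded $K$-spaces (the last isomorphism being the shift by $x_n$, which does not change Stanley depth). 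Hence from the short exact sequence $0\to IS\to T\to T/IS\to 0$ of multigraded $K$-spaces one gets
\[
\sdepth_S\ T \;\geq\; \min\{\sdepth_S\ IS,\ \sdepth_S\ JS/IS\}\,,
\]
which is statement (2). Here I am using the elementary fact (already exploited in Lemma \ref{h1}) that if a multigraded module is the direct sum of two Stanley-decomposable pieces, a Stanley decomposition of the whole is obtained by juxtaposing the two, so its Stanley depth is at least the minimum.

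For statement (1), I would instead split $T$ by the $x_n$-degree in a different way: write $T = I' \oplus x_n JS$ where $I'$ is the $K$-span of the monomials of $T$ not divisible by $x_n$, i.e. the monomials of $I$ (viewed in $S'$, since no $x_n$ appears). Thus $I' \cong I$ as a $K$-space, but one must be careful: to get a \emph{Stanley} decomposition of $T$ we need the Stanley spaces on the $I$-side to avoid $x_n$ in their $Z_i$'s. Starting from a Stanley decomposition $I=\oplus v_iK[Z_i]$ of $I$ over $S'$ (so all $Z_i\subseteq\{x_1,\dots,x_{n-1}\}$) and a Stanley decomposition $JS=\oplus w_jK[W_j]$ of $JS$ over $S$, the family $\{v_iK[Z_i]\} \cup \{x_nw_jK[W_j]\}$ is a candidate Stanley decomposition of $T$. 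The verification that it is a direct sum covering all of $T$ is the same monomial bookkeeping as in Proposition \ref{a3}: every monomial of $T$ either is divisible by $x_n$, in which case after dividing by $x_n$ it lies in $J$, hence in some $w_jK[W_j]$, so the monomial lies in $x_nw_jK[W_j]$; or it is not divisible by $x_n$, hence lies in $I\subset S'$ and thus in some $v_iK[Z_i]$. Directness holds because the first family of spaces contains no monomial divisible by $x_n$ while the second consists entirely of such monomials, and within each family directness is inherited from the original decompositions. Therefore $\sdepth\ T \geq \min\{\min_i|Z_i|,\ \min_j|W_j|\} = \min\{\sdepth_{S'}\ I,\ \sdepth_S\ JS\}$, giving (1).

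The only subtlety — and the place I would be most careful — is the bookkeeping in statement (1): one must ensure the Stanley spaces coming from $I$ genuinely do not acquire $x_n$ (which is automatic because we take the decomposition over $S'$, not over $S$), and that nothing in $T$ is missed, in particular monomials divisible by $x_n$ whose $x_n$-free part already lies in $I$ — these are covered by $x_n w_j K[W_j]$ since $I\subseteq J$, so they sit in $JS$ after dividing out one $x_n$. Everything else is the routine ``direct sum of Stanley decompositions'' principle. I expect no real obstacle beyond making this splitting precise; both parts are, as the authors say, easy once the right $K$-space decomposition of $T$ is written down.
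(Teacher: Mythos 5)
Your proposal is correct and follows essentially the same route as the paper: part (1) comes from the $K$-space splitting $T=I\oplus x_nJS$ (juxtaposing a Stanley decomposition of $I$ over $S'$ with one of $x_nJS$), and part (2) from the filtration $0\subset IS\subset T$ together with the multiplication-by-$x_n$ isomorphism $JS/IS\cong T/IS$. The extra bookkeeping you supply for the directness and coverage in (1) is exactly the verification the paper leaves implicit.
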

\begin{proof}
Note that $T=I\oplus x_nJS$ as linear $K$-spaces and so (1) holds.
On the other hand the filtration $0\subset IS\subset T$ induces an
isomorphism of linear $K$-spaces $T\cong IS\oplus T/IS$ and so
$$\sdepth\ T\geq \min\{\sdepth_{S}\ T/IS,\sdepth_S\ IS\}.$$
Note that the multiplication by $x_n$ induces an isomorphism of
linear $K$-spaces $JS/IS\cong T/IS$, which shows that $\sdepth_{S}\
T/IS=\sdepth_{S}\ JS/IS$. Thus (2) holds too.
\end{proof}

An important tool in the next section is the following result, which
unifies some results from \cite{Po}.
 \begin{Theorem}\label{p} Let $U,V$ be some monomial ideals of $S$ such that
 $U\subset V$, $U\not =V$. If $\dim_S\ V/U\leq 2$ then $\sdepth_S\
 V/U\geq \depth_S\ V/U$.
 \end{Theorem}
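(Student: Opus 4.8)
The plan is to reduce the statement to known facts about modules of small dimension, handling the cases $\dim_S V/U = 0, 1, 2$ separately, and within each case comparing $\depth_S V/U$ with $\sdepth_S V/U$ by exhibiting (or invoking the existence of) a sufficiently good Stanley decomposition. Since $V/U$ is a multigraded module with $\dim_S V/U \le 2$, we have $\depth_S V/U \le \dim_S V/U \le 2$, so the target inequality only requires producing a Stanley decomposition with $\sdepth \ge \depth_S V/U$, and the hardest target value is $2$.

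First I would dispose of the case $\dim_S V/U = 0$. Here $V/U$ has finite length, so $\depth_S V/U = 0$, and the inequality $\sdepth_S V/U \ge 0$ is trivial since every Stanley space has dimension $\ge 0$. Next, the case $\dim_S V/U = 1$: if $\depth_S V/U = 0$ there is again nothing to prove, so assume $\depth_S V/U = 1$, i.e. $V/U$ is Cohen--Macaulay of dimension $1$. The plan here is to invoke the relevant result from \cite{Po} (which, per the remark preceding the theorem, this statement is meant to unify): for a one-dimensional Cohen--Macaulay multigraded module one can build a Stanley decomposition in which every Stanley space has dimension $\ge 1$, essentially because the associated primes all have height $n-1$ and one can choose the $Z_i$ to contain the single ``free'' variable determined by the relevant minimal prime. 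This gives $\sdepth_S V/U \ge 1 = \depth_S V/U$.

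The main case, and the main obstacle, is $\dim_S V/U = 2$ with $\depth_S V/U = 2$, i.e. $V/U$ Cohen--Macaulay of dimension $2$. Here I would again lean entirely on \cite{Po}: the content of Theorem \ref{p} is precisely the assertion that the techniques of that paper, originally phrased for cyclic modules $S/I$ (or for $I$ itself), apply verbatim to an arbitrary monomial quotient $V/U$. Concretely, I expect the argument to proceed by induction on $n$, splitting $V/U$ along the variable $x_n$: writing $U = U_0 + x_n U_1$ and $V = V_0 + x_n V_1$ with the $U_i, V_i$ extended from $S' = K[x_1,\dots,x_{n-1}]$, one gets a short exact sequence of $\ZZ^n$-graded modules relating $V/U$ to $(V_1/U_1)S$ and $(V_0/U_0)S$, applies the depth lemma and an inductive Stanley-depth estimate of the type in Lemmas \ref{h1} and \ref{h2}, and checks that the dimension hypothesis is preserved for the pieces. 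The delicate point — where the ``unification'' really has to be verified rather than just quoted — is that the case analysis in \cite{Po} does not secretly use that the module is cyclic; one must confirm that the combinatorial bookkeeping of generators and the base cases go through for a general monomial submodule quotient.

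Thus the proof I would write is short: cite \cite{Po} for the structural results on Stanley decompositions of Cohen--Macaulay monomial modules of dimension $\le 2$, note that dimension $0$ is trivial, and observe that the inequality $\depth_S V/U \le \dim_S V/U$ together with those results forces $\sdepth_S V/U \ge \depth_S V/U$ in every case. The real work has already been done in \cite{Po}; Theorem \ref{p} is the statement that it applies in this slightly more general multigraded setting.
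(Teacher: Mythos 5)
Your overall strategy coincides with the paper's: the proof there is a three-line case split that simply cites earlier results (\cite[Theorem 3.3]{Po} for the Cohen--Macaulay dimension-$2$ case, \cite[Theorem 3.10]{Po} for dimension $2$ with depth $1$, and \cite[Corollary 2.2]{Po1} for $\dim_S V/U\leq 1$), all of which are already stated for quotients $V/U$ of monomial ideals, so no ``verification that cyclicity is not secretly used'' is needed. The problem with your write-up is that your case analysis has a hole: you treat $\dim_S V/U=0$, then $\dim_S V/U=1$ (where depth $0$ is trivial and depth $1$ is Cohen--Macaulay), and then $\dim_S V/U=2$ \emph{only} in the Cohen--Macaulay situation $\depth_S V/U=2$. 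The remaining case $\dim_S V/U=2$, $\depth_S V/U=1$ is not covered by any result about Cohen--Macaulay modules, and it is not ``nothing to prove'': you must exhibit a Stanley decomposition with every $|Z_i|\geq 1$, and $\sdepth=0$ can certainly occur for quotients of monomial ideals (e.g.\ $S/\mm$), so $\sdepth_S V/U\geq 1$ does not come for free from $\depth_S V/U\geq 1$. Your closing sentence, that $\depth\leq\dim$ together with the Cohen--Macaulay results ``forces'' the inequality in every case, is therefore false as stated; this non-CM case is exactly why the paper invokes the separate result \cite[Theorem 3.10]{Po}.

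A secondary, smaller point: for $\dim_S V/U\leq 1$ the paper does not use a Cohen--Macaulay argument at all but cites \cite[Corollary 2.2]{Po1} (a different reference), which handles dimension $\leq 1$ uniformly; your sketch of ``choosing $Z_i$ to contain the free variable of the relevant minimal prime'' is plausible but is not something you should present as established without a citation or a proof. To repair the proposal, add the missing case and point it at \cite[Theorem 3.10]{Po} (or supply an independent argument that $\depth_S V/U\geq 1$ implies $\sdepth_S V/U\geq 1$ for monomial quotients).
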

\begin{proof}
If $V/U$ is a Cohen-Macaulay $S$-module of dimension $2$ then it is
enough to apply \cite[Theorem 3.3]{Po}. If $\dim_S\ V/U=2$ but
$\depth_S\ V/U=1$ then the result follows from \cite[Theorem
3.10]{Po}. If $\dim_S\ V/U\leq 1$ then we may apply \cite[Corollary
2.2]{Po1}.
\end{proof}

\begin{Corollary}\label{3} Let $S=K[x_1,x_2,x_3]$, $I\subset J$,
$0\not =I\not =J$ be two monomial ideals. Then $\sdepth_S\ J/I\geq
\depth_S\ J/I$.
\end{Corollary}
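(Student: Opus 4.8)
The plan is to deduce this directly from Theorem~\ref{p} by verifying its single hypothesis, namely that $\dim_S V/U \le 2$. Since $J/I$ is a submodule of $S/I$, we have $\dim_S J/I \le \dim_S S/I$, so it is enough to bound $\dim_S S/I$. First I would observe that $I \subsetneq J \subseteq S$ forces $I$ to be a \emph{proper} monomial ideal, while by hypothesis $I \ne 0$; hence $I$ contains some monomial $m \ne 1$. Then every minimal prime of $I$ is generated by a set of variables and contains at least one variable occurring in $\supp(m)$, so $\height I \ge 1$ and therefore
$\dim_S J/I \le \dim_S S/I = 3 - \height I \le 2$.

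With this bound established, the data $I \subseteq J$, $I \ne J$, and $\dim_S J/I \le 2$ are precisely the hypotheses of Theorem~\ref{p} applied to $U = I$ and $V = J$, and that theorem yields $\sdepth_S J/I \ge \depth_S J/I$ at once.

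I do not expect any genuine obstacle here: the corollary is essentially a specialization of Theorem~\ref{p}, and the only place the assumptions are truly needed is $I \ne 0$ — without it one could take $I = 0$ and $J = S$ (or $J = \mm$), where $J/I$ has dimension $3$ and Theorem~\ref{p} no longer applies. The one thing to check with care is thus the dimension estimate above, which is routine for monomial ideals.
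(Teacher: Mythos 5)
Your proposal is correct and matches the paper's own argument: the paper likewise observes that $I\neq 0$ forces $\dim_S J/I\leq\dim_S S/I\leq 2$ and then invokes Theorem~\ref{p}. Your version just spells out the height computation that the paper leaves implicit.
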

For the proof note that $\depth_S\ J/I\leq \dim_S\ S/I\leq 2$ and
apply Theorem \ref{p}.

\section{A hard inequality}

Let  $S'=K[x_1,\ldots,x_{n-1}]$  be  a polynomial ring in $n-1$
variables over a field $K$,  $S=S'[x_n]$ and $U,V\subset S'$,
$U\subset V$ two homogeneous ideals. We want to study the depth of
the ideal $W=(U+x_nV)S$ of $S$. Actually every monomial square free
ideal $T$ of $S$ has this form because then $(T:x_n)$ is generated
by an ideal $V\subset S'$ and $T=(U+x_nV)S$ for $U=T\cap S'$.

\begin{Lemma} \label{easy} Suppose that $U\not =V$ and $r=\depth_{S'}\
S' /U=\depth_{S'}\ S' /V$. Then $r=\depth_{S'}\ V /U$ if and only if  $r=\depth_S\
S/W$.
\end{Lemma}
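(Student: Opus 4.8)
The statement to prove is Lemma~\ref{easy}: under the hypothesis $U\neq V$ and $r=\depth_{S'} S'/U=\depth_{S'} S'/V$, one has $r=\depth_{S'} V/U$ if and only if $r=\depth_S S/W$, where $W=(U+x_nV)S$. The plan is to reduce everything to the depth sensitivity of short exact sequences via the long exact sequence in local cohomology (equivalently, the Depth Lemma), and to identify $S/W$ with an $S$-module built out of $S'/U$, $S'/V$ and $V/U$ by base change and the extra variable $x_n$.

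\textbf{Step 1: Describe $S/W$ as an extension.} I would first write down, as in the setup preceding Lemma~\ref{h2}, the $K$-linear (indeed multigraded $S$-module) structure of $S/W$. Since $W=(U+x_nV)S$, multiplication by $x_n$ carries $S/(V,x_n)S$ isomorphically onto $x_nS/W$ inside $S/W$, and $S/W$ sits in a short exact sequence
\[
0\To x_n S/W \To S/W \To (S/W)/(x_n) \To 0,
\]
where $(S/W)/(x_nS+W)=S/(U+x_nS)=(S'/U)[x_n]/(x_n)\cong S'/U$ as an $S'$-module (and as an $S$-module via $S\to S'$). On the other hand $x_nS/W\cong S/(W:x_n)$; and $(W:x_n)=(V + x_nV)S + \ldots$ — more precisely one checks $(W:x_n)S = (V+x_nV)S$, so $x_nS/W\cong S/(V+x_nV)S \cong (S/VS)$ is not quite right, rather $x_nS/(W\cap x_nS)\cong S/((W:x_n))$ and $(W:x_n)\supseteq VS$, giving a surjection onto $S/VS=(S'/V)[x_n]$. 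I would nail down the exact identification: the cleanest route is that the filtration $WS\subset (U+x_nS')S = W + x_n S$ has successive quotients $V/U$ (sitting in degree $1$ in $x_n$, tensored up, but actually $x_n\cdot(V S/W)\cong (V/U)\tensor_{S'}S'[x_n]$) — so that $S/W$ has a two-step filtration with quotients $S'/U$ (as $S'$-module, so depth $r$ over $S$ too since adding a variable is irrelevant here as $x_n$ kills it... careful: as an $S$-module $S'/U$ has depth equal to $\depth_{S'}S'/U=r$) and $(V/U)\tensor_{S'} S'[x_n]/(x_n)$... I'll sort the bookkeeping so the two graded pieces are, up to the $x_n$-grading, copies of $S'/U$, $S'/V$ and $V/U$.

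\textbf{Step 2: Apply the Depth Lemma both ways.} Once $S/W$ is exhibited in a short exact sequence whose two outer terms have depths drawn from $\{r, \depth_{S'}V/U, \ldots\}$, the equivalence is formal. If $\depth_{S'}V/U = r$, then both outer modules in the sequence for $S/W$ have depth $\geq r$ (here I use Proposition~\ref{a1}-type stability and the fact that $\depth_S M = \depth_{S'}M$ for an $S'$-module $M$), and the Depth Lemma gives $\depth_S S/W\geq r$; the reverse inequality $\depth_S S/W\leq r$ comes from $\depth_S S/W\leq \depth_S S/WS' $-type considerations, or more directly from the surjection $S/W\twoheadrightarrow S'/U$ combined with $\depth$ being at most the dimension, plus the fact that $x_n$ is a nonzerodivisor nowhere useful — actually the clean bound is $\depth_S S/W \le \depth_S S'/U = r$ is false in general, so instead I use that $r=\depth S/V = \depth$ of the submodule piece and that equality propagates. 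Conversely, if $\depth_S S/W=r$, I read the long exact sequence in local cohomology $H^i_{\mm}$ backwards: knowing $\depth$ of $S/W$ and of two of the three modules forces the depth of the third (here $V/U$, suitably interpreted) to be $r$ as well, because it is squeezed between $r$ (from $H^{r}$ not vanishing somewhere) and $\geq r$ (which always holds by Proposition~\ref{a1}/Corollary~\ref{a2} applied to $U\subset V$, giving $\depth_{S'}V/U\geq \min\{r,r\}=r$ from the sequence $0\to V/U\to S'/U\to S'/V\to 0$ — wait, that sequence already gives $\depth_{S'}V/U\geq r$ unconditionally!).

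\textbf{The main obstacle.} So in fact the sequence $0\to V/U\to S'/U\to S'/V\to 0$ shows $\depth_{S'}V/U\geq \min\{\depth S'/U,\depth S'/V+1\}\geq r$ always; the content of the lemma is the $\le$ direction in each implication, i.e. detecting when the depth of $V/U$ \emph{drops} exactly to $r$ versus staying $>r$, and correspondingly whether $\depth_S S/W=r$ or $>r$. The hard part will be Step~1: getting the precise $S$-module (not just $K$-vector-space) description of $S/W$ so that the two relevant subquotients are honestly $V/U$ and $S'/V$ (up to the harmless polynomial extension in $x_n$), and then checking that the connecting homomorphisms in local cohomology are the ones induced by the genuine module maps $V/U\hookrightarrow S'/U$ and $S'/U\twoheadrightarrow S'/V$ — so that the numerical juggling of the Depth Lemma transfers faithfully in both directions. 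Once that dictionary is in place, both implications follow by the standard three-term depth inequalities applied to the appropriate short exact sequences, with no further computation.
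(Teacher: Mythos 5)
There is a genuine gap, and it sits exactly where you say ``the hard part'' will be. Your plan is to exhibit $S/W$ in a short exact sequence and then let the Depth Lemma do the work in both directions. The relevant sequence is $0\to S/(W:x_n)\xrightarrow{x_n} S/W\to S/(W,x_n)\to 0$, i.e.\ $0\to (S'/V)[x_n]\to S/W\to S'/U\to 0$, whose outer terms have depths $r+1$ and $r$. But the Depth Lemma only yields $\depth_S S/W\ge r$ --- which, like your correct observation that $\depth_{S'}V/U\ge r$ always holds, is unconditional and carries none of the content of the lemma. Deciding whether $\depth_S S/W$ equals $r$ or exceeds it amounts to deciding whether the connecting homomorphism $H^r_{\mm}(S'/U)\to H^{r+1}_{\mm}((S'/V)[x_n])$ has nonzero kernel, and relating that kernel to $H^r(V/U)=\Ker\bigl(H^r(S'/U)\to H^r(S'/V)\bigr)$ requires identifying the connecting map explicitly (via the K\"unneth-type description of $H^{r+1}_{\mm}$ of a polynomial extension). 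None of this is carried out; your Step 2 visibly stalls at precisely this point (``\dots is false in general, so instead I use \dots''). Three-term depth inequalities alone cannot prove an ``if and only if'' of this kind.

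The paper's proof avoids connecting homomorphisms entirely. It first reduces to $r=0$ by choosing a length-$r$ sequence of homogeneous forms in $(x_1,\dots,x_{n-1})$ that is simultaneously regular on $S'/U$, $S'/V$, $V/U$ and $S/W$, and passing to the quotients. In the reduced situation the two implications are proved by hand: if $\depth_{S'}V/U=0$, a socle element $v\in V\setminus U$ with $(U:v)=m_{n-1}$ gives a nonzero class in $S/W$ killed by $m_{n-1}$ and by $x_n$ (since $x_nv\in x_nV\subset W$), so $\depth_S S/W=0$; if $\depth_{S'}V/U>0$, a linear form $a$ regular on $V/U$ yields the element $x_n+a$, which is shown to be regular on $S/W$ by writing $w=\sum_i x_n^i v_i$ and chasing coefficients ($v_s\in V$, \dots, $v_0+av_1\in V$, $av_0\in U$, hence all $v_i\in V$ and then $v_0\in U$). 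If you want to salvage your approach you must either compute the connecting map, or switch to this kind of direct socle/regular-element argument after the reduction to depth zero.
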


\begin{proof} Set $r=\depth_{S'}\
S' /U$ and choose a  sequence $f_1,\ldots, f_r$ of homogeneous
elements of $m_{n-1}=(x_1,\ldots, x_{n-1})\subset S'$, which is
regular on $S'/U$, $S'/V$ and $V/U$ simultaneously. Set ${\bar
U}=(U,f_1,\ldots,f_r)$, ${\bar V}=(V,f_1,\ldots,f_r)$. Then
tensorizing  by $S'/(f_1,\ldots,f_r)$ the exact sequence
$$0\to V/U\to S'/U\to S'/V\to 0$$
we get the exact sequence
$$0\to V/U\otimes_{S'} S'/(f_1,\ldots,f_r)\to S'/{\bar U}\to S'/{\bar V}\to 0$$
and so ${\bar V}/{\bar U}\cong V/U\otimes_{S'} S'/(f_1,\ldots,f_r)$
has depth $0$.

Note that $f_1,\ldots, f_r$ is regular also on $S/W$ and taking
${\bar W}=W+(f_1,\ldots, f_r)S$ we get $\depth_S\ S/W=\depth_S\ S/
{\bar W} +r$. Thus passing from $U,V,W$ to ${\bar U},{\bar V},{\bar
W}$ we may reduce the problem to the case $r=0$.

If $\depth_{S'}\ V /U=0$ then there exists an element $v\in V\setminus U$ such that
$(U:v)=m_{n-1}$. Thus the non-zero element of $ S/W$ induced by $v$
is annihilated by $m_{n-1}$ and $x_n$ because $v\in V$. Hence
$\depth_S\ S/W=0$.

If $\depth_{S'}\ V /U>0$ there exists a homogeneous regular element $a$  for $V/U$ in the maximal ideal of $S'$ of degree $1$ (we may reduce to the case when $K$ is infinite) . We show that $x_n+a$ is regular for $S/W$.
Let $w=\Sigma_{i=0}^s x_n^iv_i$ for some  elements $v_i$ of $S'$ such that  $(x_n+a)w\in W$. It follows that $av_0\in U$, $(v_0+av_1)\in V,\ldots,
(v_{s-1}+av_s)\in V$, $v_s\in V$ and so $v_i\in V$ for all $i$. Then $v_0\in U$ because $a$ is regular on $V/U$, that is $w\in W$.
\end{proof}

\begin{Example} {\em Let $n=4$, $V=(x_1,x_2)$, $U=V\cap (x_1,x_3)$
be ideals of $S'=K[x_1,x_2,x_3]$ and $W=(U+x_4V)S$. Then
$\{x_3-x_2\}$ is a maximal regular sequence on $V/U$ and on $S/W$ as
well. Thus $\depth_{S'}\ V/U= \depth_{S'}\ S'/U=\depth_{S'}\
S'/V=\depth_{S}\ S/W=1$.}
\end{Example}

\begin{Lemma}\label{bad} Let $I, J\subset S'$, $I\subset J$,
$I\not=J$ be two monomial ideals, $T=(I+x_nJ)S$ such that
\begin{enumerate}
\item{} $\depth_{S'}\ S'/I=\depth_S\ S/T -1,$
\item{} $\sdepth_{S'}\ I\geq 1+\depth_{S'}\ S'/I,$
\item{} $\sdepth_{S'}\ J/I\geq \depth_{S'}\ J/I.$
\end{enumerate}
Then $\sdepth_{S}\ T\geq 1+\depth_{S}\ S/T.$
\end{Lemma}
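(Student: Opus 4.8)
The plan is to first convert the numerical hypothesis (1) into a rigid system of depth equalities, then use Lemma \ref{easy} to pin down $\depth_{S'}\ J/I$, and finally feed everything into Lemma \ref{h2}(2) together with the elementary behaviour of Stanley depth under adjoining one variable. Set $d=\depth_{S'}\ S'/I$, so that (1) reads $\depth_S\ S/T=d+1$. The starting point is the short exact sequence of $\ZZ^n$-graded $S$-modules
\[
0\to S/JS\to S/T\to S'/I\to 0,
\]
in which the first map is multiplication by $x_n$; it is exact because $T+x_nS=IS+x_nS$ (so the cokernel is $S/(IS+x_nS)=S'/I$) and because $x_na\in T$ forces every $S'$-coefficient of $a$ into $I+J=J$, i.e.\ $a\in JS$ (so multiplication by $x_n$ is injective on $S/JS$). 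Since $S$ is a polynomial extension of $S'$ we have $\depth_S\ S/JS=\depth_{S'}\ S'/J+1$. Applying the depth lemma to the displayed sequence twice, once bounding the submodule $S/JS$ from below and once bounding the quotient $S'/I$ from below, I expect to get $\depth_{S'}\ S'/J\geq d$ and $\depth_{S'}\ S'/J\leq d$, hence $\depth_{S'}\ S'/J=d=\depth_{S'}\ S'/I$.

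Now I would apply Lemma \ref{easy} with $U=I$, $V=J$, $W=T$ and $r=d$, which is legitimate since $I\neq J$ and $\depth_{S'}\ S'/I=\depth_{S'}\ S'/J=d$. Because $\depth_S\ S/T=d+1\neq d$, the equivalence in Lemma \ref{easy} forces $\depth_{S'}\ J/I\neq d$. On the other hand the depth lemma applied to $0\to J/I\to S'/I\to S'/J\to 0$ gives $\depth_{S'}\ J/I\geq\min\{d,d+1\}=d$, so in fact $\depth_{S'}\ J/I\geq d+1$.

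For the Stanley depth I would use Lemma \ref{h2}(2), namely $\sdepth_S\ T\geq\min\{\sdepth_S\ JS/IS,\ \sdepth_S\ IS\}$. For any $\ZZ^{n-1}$-graded $S'$-module $M$, tensoring a Stanley decomposition realizing $\sdepth_{S'}\ M$ with $K[x_n]$ (which replaces each $Z_i$ by $Z_i\cup\{x_n\}$) yields a Stanley decomposition of $M\otimes_{S'}S$ over $S$, so $\sdepth_S\ (M\otimes_{S'}S)\geq\sdepth_{S'}\ M+1$. Applying this with $M=I$ (using $IS\cong I\otimes_{S'}S$) and with $M=J/I$ (using $JS/IS\cong(J/I)\otimes_{S'}S$), and invoking hypotheses (2) and (3) together with $\depth_{S'}\ J/I\geq d+1$ from the previous paragraph, we obtain $\sdepth_S\ IS\geq\sdepth_{S'}\ I+1\geq d+2$ and $\sdepth_S\ JS/IS\geq\sdepth_{S'}\ J/I+1\geq\depth_{S'}\ J/I+1\geq d+2$. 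Hence $\sdepth_S\ T\geq d+2=1+\depth_S\ S/T$, which is the claim.

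I expect the first paragraph to be the main obstacle: the whole argument hinges on squeezing out of the single equality (1) the full rigidity $\depth_{S'}\ S'/J=\depth_{S'}\ S'/I=\depth_S\ S/T-1$, since that is exactly what makes Lemma \ref{easy} applicable and what, via Lemma \ref{easy}, upgrades hypothesis (3) to the bound $\sdepth_S\ JS/IS\geq d+2$. Once this is secured, the remaining steps are routine applications of Lemma \ref{h2}(2) and of the behaviour of Stanley depth under a one-variable polynomial extension.
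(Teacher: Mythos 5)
Your proposal is correct and follows essentially the same route as the paper's proof: the same exact sequence $0\to S/JS\to S/T\to S'/I\to 0$ with the Depth Lemma to force $\depth_{S'}S'/J=\depth_{S'}S'/I$, the same appeal to Lemma \ref{easy} to rule out $\depth_{S'}J/I=\depth_{S'}S'/I$, the Depth Lemma on $0\to J/I\to S'/I\to S'/J\to 0$ to upgrade this to $\depth_{S'}J/I\geq 1+\depth_{S'}S'/I$, and Lemma \ref{h2}(2) combined with the one-variable extension of Stanley depth (the paper cites \cite[Lemma 3.6]{HVZ} where you re-derive it by tensoring with $K[x_n]$). The only difference is the order of the steps, which is immaterial.
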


\begin{proof}
By Lemma  \ref{h2} we have $$\sdepth_S T\geq
1+\min\{\sdepth_{S'}I,\sdepth_{S'}J/I\}\geq
1+\min\{1+\depth_{S'}S'/I,\depth_{S'}J/I\}$$ using (3), (2) and
\cite[Lemma 3.6]{HVZ}. Note that in the following exact sequence
$$0\to S/JS=S/(T:x_n)\xrightarrow{x_n} S/T\to S/(T,x_n)\cong S'/I\to
0 $$ we have $\depth_S\ S/JS=\depth_{S'}\ S'/I +1$ because of (1)
and the Depth Lemma \cite[Lemma 1.3.9]{Vi}. Thus $\depth_{S'}\ S'/I
=\depth_{S'}\ S'/J$. As $\depth_{S'}\ S'/I\not =\depth_S\ S/T$ we
get $\depth_{S'}\ S'/I\not =\depth_{S'}\ J/I$ by Lemma \ref{easy}.
But $\depth_{S'}\ J/I\geq \depth_{S'}\ S'/I$ because of the Depth
Lemma  applied to the  following exact sequence
$$0\to J/I\to S'/I\to S'/J\to 0.$$
It follows that $\depth_{S'}\ J/I\geq 1+\depth_{S'}\ S'/I$ and so
$$\sdepth_{S}\ T\geq 2+\depth_{S'}\ S'/I=1+\depth_{S}\ S/T.$$
\end{proof}

\begin{Remark} {\em The above lemma introduces the difficult
hypothesis (3) and one can hope that it is not necessary at least
for square free monomial ideals. It seems this is not the case as
shows somehow the next example.}
\end{Remark}

\begin{Example} {\em Let $n=4$, $J=(x_1x_3,x_2)$, $I=(x_1x_2,x_1x_3)$
be ideals of $S'=K[x_1,x_2,x_3]$ and $T=(I+x_4J)S=(x_1,x_2)\cap
(x_2,x_3)\cap (x_1,x_4).$ Then $\{x_4-x_2,x_3-x_1\}$ is a maximal
regular sequence on  $S/T$. Thus $\depth_{S}\ S/T=2$, $ \depth_{S'}\
S'/I=\depth_{S'}\ S'/J=1$.}
\end{Example}

\begin{Lemma}\label{good}
Let  $I, J\subset S'$, $I\subset J$, $I\not=J$ be two monomial
ideals, $T=(I+x_nJ)S$ such that
\begin{enumerate}
\item{} $\depth_{S'}\ S'/I\not =\depth_S\ S/T -1,$
\item{} $\sdepth_{S'}\ I\geq 1+\depth_{S'}\ S'/I,$ \ \ \
$\sdepth_{S'}\ J\geq 1+\depth_{S'}\ S'/J.$
\end{enumerate}
Then $\sdepth_{S}\ T\geq 1+\depth_{S}\ S/T.$
\end{Lemma}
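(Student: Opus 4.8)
The plan is to compare a lower bound for $\sdepth_S T$ obtained from Lemma \ref{h2}(1) with an upper bound for $\depth_S S/T$ obtained from the Depth Lemma and Lemma \ref{easy}; hypothesis (1) will be used exactly to exclude the one value of $\depth_S S/T$ that would otherwise break the comparison. Throughout I set $a=\depth_{S'}S'/I$, $b=\depth_{S'}S'/J$, $c=\depth_{S'}J/I$ and $t=\depth_S S/T$.

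The easy half comes first. By Lemma \ref{h2}(1), $\sdepth_S T\geq\min\{\sdepth_{S'}I,\ \sdepth_S JS\}$, and $\sdepth_S JS=\sdepth_{S'}J+1$ by \cite[Lemma 3.6]{HVZ}; so hypothesis (2) gives
$$\sdepth_S T\geq\min\{1+a,\ 2+b\}=1+\min\{a,\,b+1\}.$$
Hence it suffices to prove $t\leq\min\{a,b+1\}$.

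For this I would use three short exact sequences: $(\mathrm{E}1)\colon\ 0\to S/JS\xrightarrow{\,x_n\,}S/T\to S'/I\to 0$, coming from $(T:x_n)=JS$ and $S/(T,x_n)\cong S'/I$, with $\depth_S S/JS=b+1$; $(\mathrm{E}2)\colon\ 0\to T/IS\to S/IS\to S/T\to 0$, where $\depth_S S/IS=a+1$ and, since $IS\cap x_nJS=x_nIS$, one has $T/IS\cong JS/IS\cong (J/I)\otimes_{S'}S$ (flatness), so $\depth_S T/IS=c+1$; and $(\mathrm{E}3)\colon\ 0\to J/I\to S'/I\to S'/J\to 0$ over $S'$. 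By hypothesis (1) we have $t\leq a$ or $t\geq a+2$, and I would rule out the second possibility by contradiction. Assuming $t\geq a+2$: the Depth Lemma applied to $(\mathrm{E}1)$ yields $a\geq\min\{b,t\}$ and $b+1\geq\min\{t,a+1\}$, and since $t>a$ these force $b=a$. Then $\depth_{S'}S'/I=\depth_{S'}S'/J=a$, so Lemma \ref{easy} applies (with $r=a$, $W=T$, $I\neq J$); as $\depth_S S/T=t\neq a$ it gives $c=\depth_{S'}J/I\neq a$, while the Depth Lemma on $(\mathrm{E}3)$ gives $c\geq\min\{a,a+1\}=a$, hence $c\geq a+1$. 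Now the Depth Lemma on $(\mathrm{E}2)$ gives $a+1=\depth_S S/IS\geq\min\{c+1,\,t\}\geq a+2$, a contradiction. Therefore $t\leq a$.

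Once $t\leq a$ is known, the Depth Lemma on $(\mathrm{E}1)$ (via $\depth_S S/JS\geq\min\{\depth_S S/T,\ \depth_S S'/I+1\}$) gives $b+1\geq\min\{t,a+1\}=t$, so $\min\{a,b+1\}\geq t$; combining with the displayed inequality, $\sdepth_S T\geq 1+\min\{a,b+1\}\geq 1+t=1+\depth_S S/T$, which is the claim. The main obstacle is exactly the excluded range $t\geq a+2$: the Depth Lemma alone never bounds $\depth_S S/T$ from above, so the real work is to squeeze out enough rigidity — first $b=a$ from $(\mathrm{E}1)$, then $c\geq a+1$ from Lemma \ref{easy}, then the numerical clash inside $(\mathrm{E}2)$ — to see that this range is empty. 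The identification $T/IS\cong (J/I)\otimes_{S'}S$ is the small structural point that makes $(\mathrm{E}2)$ usable here.
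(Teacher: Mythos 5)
Your proof is correct. The skeleton coincides with the paper's: both start from Lemma \ref{h2}(1) together with \cite[Lemma 3.6]{HVZ} to get $\sdepth_S T\geq 1+\min\{a,b+1\}$ (in your notation), and both finish by reading off $b+1\geq t$ from the Depth Lemma applied to $0\to S/JS\to S/T\to S'/I\to 0$. The genuine difference is in how the key inequality $t\leq a$ is obtained. The paper gets it in one line: Proposition \ref{a1} gives $a=\depth_S S/(T,x_n)\geq \depth_S S/T-1$, and hypothesis (1) makes this inequality strict. You never invoke Proposition \ref{a1}; instead you note that (1) only leaves the alternatives $t\leq a$ or $t\geq a+2$, and you eliminate the second by contradiction --- forcing $b=a$ out of the colon sequence, getting $\depth_{S'}J/I\neq a$ from Lemma \ref{easy}, and producing a numerical clash in $0\to T/IS\to S/IS\to S/T\to 0$ via the identification $T/IS\cong JS/IS$ (your verification that $IS\cap x_nJS=x_nIS$ is the small point that makes this sequence usable, and it is correct). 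Your route is longer but stays inside the paper's own lemmas, trading the citation of Rauf's inequality for an extra appeal to Lemma \ref{easy} and one more exact sequence; the paper's route is the economical one and isolates hypothesis (1) as doing nothing more than upgrading $a\geq t-1$ to $a\geq t$.
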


\begin{proof}
By Lemma  \ref{h2} we have $$\sdepth_S T\geq
\min\{\sdepth_{S'}I,1+\sdepth_{S'}J\}\geq 1+\min\{\depth_{S'}
S'/I,1+\depth_{S'}S'/J\}$$ using  (2). Applying Proposition \ref{a1}
we get $\depth_{S'} S'/I=\depth_{S}S/(T,x_n)\geq \depth_S S/T-1$,
the inequality being strict because of (1). We have the following
exact sequence
$$0\to S/JS=S/(T:x_n)\xrightarrow{x_n} S/T\to S/(T,x_n)\cong S'/I\to
0 .$$ If $\depth_{S'} S'/I>\depth_S S/T$ then $\depth_{S}S/JS=
\depth_S S/T$ by Depth Lemma and so
$$\sdepth_S T\geq 1+\min\{\depth_{S'} S'/I,\depth_{S}S/JS\}=1+\depth_S S/T.$$
If $\depth_{S'} S'/I=\depth_S S/T$ then $\depth_{S}S/JS\geq
\depth_{S'} S'/I$ again by Depth Lemma and thus
$$\sdepth_S T\geq 1+\depth_{S'} S'/I=1+\depth_S S/T.$$
\end{proof}

\begin{Theorem} \label{main} Suppose that the Stanley's conjecture
holds for factors $V/U$ of monomial square free ideals, $U,V\subset
S'=K[x_1,\ldots,x_{n-1}]$, $U\subset V$, that is $\sdepth_{S'}\
V/U\geq \depth_{S'}\ V/U$. Then the Weak Conjecture  holds for
monomial square free ideals of $S=K[x_1,\ldots,x_{n}].$
\end{Theorem}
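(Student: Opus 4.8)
The plan is to take an arbitrary monomial square free ideal $T\subset S=K[x_1,\ldots,x_n]$ and reduce its Stanley depth bound to hypotheses on ideals of $S'=K[x_1,\ldots,x_{n-1}]$ that are covered by the assumed validity of Stanley's Conjecture for factors $V/U$ of square free ideals in $S'$. As noted in the text preceding Lemma \ref{easy}, one may write $T=(I+x_nJ)S$ with $I=T\cap S'$ and $J$ the (square free) ideal of $S'$ generated by $T:x_n$; here $I\subset J$. If $I=J$ the module is essentially a cylinder and the statement is immediate (or reducible to the $S'$-case), so assume $I\neq J$. The goal $\sdepth_S T\geq\depth_S T=1+\depth_S S/T$ then splits, according to whether $\depth_{S'}S'/I=\depth_S S/T-1$ or not, into exactly the two situations handled by Lemma \ref{bad} and Lemma \ref{good}.

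First I would verify that hypotheses (2) and (3) of Lemma \ref{bad}, and hypothesis (2) of Lemma \ref{good}, all follow from the assumed conjecture for $S'$. For (3) in Lemma \ref{bad}, $J/I$ is a factor of square free monomial ideals of $S'$, so $\sdepth_{S'}J/I\geq\depth_{S'}J/I$ by assumption directly. For the statements of the form $\sdepth_{S'}I\geq 1+\depth_{S'}S'/I$ and $\sdepth_{S'}J\geq 1+\depth_{S'}S'/J$: here $I$ and $J$ are themselves square free monomial ideals of $S'$, and applying the assumed conjecture to $I=I$ (viewed as a factor, with the ambient ideal being the unit ideal, or via the short exact sequence $0\to I\to S'\to S'/I\to 0$ together with \cite[Lemma 3.6]{HVZ} and \cite[Theorem 4.3]{Po} / our Theorem \ref{p}) yields $\sdepth_{S'}I\geq\depth_{S'}I=1+\depth_{S'}S'/I$; likewise for $J$. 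One should be slightly careful that $I$ could be zero or the whole ring in degenerate cases, but these are trivial to dispatch separately.

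With those hypotheses in hand, the proof is a dichotomy. Case one: $\depth_{S'}S'/I=\depth_S S/T-1$. Then all three hypotheses of Lemma \ref{bad} hold, and Lemma \ref{bad} gives $\sdepth_S T\geq 1+\depth_S S/T$, as required. Case two: $\depth_{S'}S'/I\neq\depth_S S/T-1$. Then hypotheses (1) and (2) of Lemma \ref{good} hold, and Lemma \ref{good} gives the same conclusion $\sdepth_S T\geq 1+\depth_S S/T$. Since by the remark after the Weak Conjecture this inequality is exactly $\sdepth_S T\geq\depth_S T$, we are done.

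The main obstacle is really bookkeeping rather than new mathematics: one must make sure that the ideals $I$ and $J$ extracted from $T$ are genuinely square free (they are, since $T$ is square free: $I=T\cap S'$ and $J$ is generated by the monomials $u/x_n$ for $x_n\mid u\in G(T)$ together with those $u\in G(T)$ with $x_n\nmid u$, all of which are square free), and that the degenerate cases $I=J$, $I=0$, $J=S'$, or $T=S$ are handled so that Lemmas \ref{bad} and \ref{good} can be legitimately invoked. Once that verification is complete, the two lemmas cover all possibilities for the relation between $\depth_{S'}S'/I$ and $\depth_S S/T$, and no further case analysis is needed.
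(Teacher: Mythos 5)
Your proof is correct and shares its skeleton with the paper's: decompose $T=(I+x_nJ)S$ with $I=T\cap S'$ and $J$ generated by $(T:x_n)$, dispose of $I=J$ via \cite[Lemma 3.6]{HVZ}, and otherwise split into the two cases $\depth_{S'}\ S'/I=\depth_S\ S/T-1$ (Lemma \ref{bad}) and $\depth_{S'}\ S'/I\neq\depth_S\ S/T-1$ (Lemma \ref{good}). The genuine difference is that the paper argues by induction on the number $r\leq n$ of variables: the inequalities $\sdepth_{S_{r-1}}\ I\geq 1+\depth_{S_{r-1}}\ S_{r-1}/I$ and $\sdepth_{S_{r-1}}\ J\geq 1+\depth_{S_{r-1}}\ S_{r-1}/J$ needed as hypothesis (2) of both lemmas are supplied by the induction hypothesis (the Weak Conjecture in $r-1$ variables), and the assumed conjecture for factors is invoked only for hypothesis (3) of Lemma \ref{bad} --- where one automatically has $I\neq 0$, since $I=0$ would give $\depth_{S'}\ S'/I=n-1>n-2\geq\depth_S\ S/T-1$. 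You instead extract those ideal inequalities directly from the stated hypothesis by taking $U=0$, so that $V/U=I$ itself counts as a factor and $\sdepth_{S'}\ I\geq\depth_{S'}\ I=1+\depth_{S'}\ S'/I$; this removes the induction entirely. Under the literal reading of the hypothesis (the zero ideal is a square free monomial ideal) your route is valid and somewhat more direct, but note what the induction buys the paper: its proof only ever uses the factor hypothesis with $U\neq 0$, which is exactly what Proposition \ref{4} later supplies when Theorem \ref{5} is deduced, whereas your formulation additionally consumes the $U=0$ instances, i.e.\ the Weak Conjecture in $n-1$ variables (available from Corollary \ref{wc4} when $n=5$, or by rerunning your argument one variable down --- which is precisely the paper's induction). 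One small repair: your parenthetical justification of $\depth_{S'}\ I=1+\depth_{S'}\ S'/I$ via Theorem \ref{p} and \cite[Lemma 3.6]{HVZ} is off target; the equality is just the Depth Lemma applied to $0\to I\to S'\to S'/I\to 0$ together with $\depth_{S'}\ S'/I<\depth_{S'}\ S'$ for $I\neq 0$. This does not affect the correctness of the argument.
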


\begin{proof} Let $r\leq n$ be a positive integer and $T\subset
S_r=K[x_1,\ldots,x_{r}]$ a monomial square free ideal. By induction
on $r$ we show that $\sdepth_{S_r}\ T\geq 1+\depth_{S_r}\ S_r/T$,
the case $r=1$ being trivial. Clearly, $(T:x_r)$ is generated by a
monomial square free ideal $J\subset S_{r-1}$ containing $I=T\cap
S_{r-1}$. By induction hypothesis we have  $\sdepth_{S_{r-1}}\ I\geq
1+\depth_{S_{r-1}}\ S_{r-1}/I,$ $\sdepth_{S_{r-1}}\ J\geq
1+\depth_{S_{r-1}}\ S_{r-1}/J.$ If $I=J$ then $T=IS$, $x_r$ is
regular on $S_r/T$ and we have
$$\sdepth_{S_r}\ T=1+\sdepth_{S_{r-1}}\ I\geq 2+\depth_{S_{r-1}}\ S_{r-1}/I
=1+\depth_{S_{r}}\ S_{r}/T,$$ using \cite[Lemma 3.6]{HVZ}. Now
suppose that $I\not =J$. If $\depth_{S_{r-1}}\ S_{r-1}/I\not
=\depth_{S_r}\ S_r/T -1,$ then it is enough to apply Lemma
\ref{good}. If $\depth_{S_{r-1}}\ S_{r-1}/I =\depth_{S_r}\ S_r/T
-1,$ then apply Lemma \ref{bad}.
\end{proof}

\begin{Corollary} \label{wc4} The Weak Conjecture holds in
$S=K[x_1,\ldots,x_4]$.
\end{Corollary}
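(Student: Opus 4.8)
The plan is to apply Theorem~\ref{main} with $n=4$. That theorem reduces the Weak Conjecture for monomial square free ideals of $K[x_1,\ldots,x_4]$ to the following three-variable assertion: for all monomial square free ideals $U\subset V$ of $S'=K[x_1,x_2,x_3]$ one has $\sdepth_{S'}V/U\ge\depth_{S'}V/U$. So the whole proof reduces to establishing this, and I would split it according to whether $U$ is zero.

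Suppose first $U\ne 0$ (the case $U=V$ being vacuous). Then $S'/U$ is a quotient of $S'/(f)$ for any $0\ne f\in U$, so $\dim_{S'}S'/U\le 2$, and since $V/U$ is a submodule of $S'/U$ we get $\dim_{S'}V/U\le 2$. Now Theorem~\ref{p} (equivalently Corollary~\ref{3}) yields $\sdepth_{S'}V/U\ge\depth_{S'}V/U$ at once; in particular this already handles the cyclic modules $S'/U$, which also follows from the fact recalled in the introduction that Stanley's conjecture holds for cyclic modules in at most five variables by \cite{AP} and \cite[Theorem~4.3]{Po}.

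The remaining case $U=0$ is the genuine point: there $V/U=V$ is a nonzero monomial square free ideal of $S'$, hence a module of dimension $3$, so Theorem~\ref{p} does not apply. I would dispatch it by first proving the Weak Conjecture in three variables: apply Theorem~\ref{main} with $n=3$, whose hypothesis only requires Stanley's conjecture for factors $V'/U'$ of monomial square free ideals of $K[x_1,x_2]$; but every such factor is a submodule of $K[x_1,x_2]/U'$ and therefore has dimension $\le 2$, so Theorem~\ref{p} supplies the inequality immediately. Granting the three-variable Weak Conjecture we obtain $\sdepth_{S'}V\ge\depth_{S'}V$, completing the last case and hence the proof.

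The only real obstacle is organizational rather than computational: one must notice that Corollary~\ref{3} alone does not suffice, since it needs $U\ne 0$ precisely because a nonzero ideal of $K[x_1,x_2,x_3]$ is three-dimensional, and that the gap must be filled by bootstrapping through the three-variable Weak Conjecture; one must also keep track of the degenerate instances $U=V$ and $V=S'$. Everything else is carried by Theorem~\ref{main} and Theorem~\ref{p}.
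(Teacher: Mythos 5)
Your proof is correct, but it is organized differently from the paper's. The paper does not try to verify the full hypothesis of Theorem~\ref{main} for $n=4$; instead it re-runs the induction step directly through Lemmas~\ref{bad} and~\ref{good} and observes that the only place where Stanley's conjecture for a three-variable factor $J/I$ is actually needed is hypothesis (3) of Lemma~\ref{bad}, which is invoked only when $\depth_{S'}S'/I=\depth_S S/T-1$; in that situation $I\neq 0$ is forced (otherwise $\depth_{S'}S'/I=3$ while $\depth_S S/T\leq 3$), so $\dim_{S'}J/I\leq 2$ and Corollary~\ref{3} finishes. You instead treat Theorem~\ref{main} as a black box and verify its hypothesis for \emph{all} factors $V/U$ in three variables, which requires you to handle $U=0$ separately; you do this by first bootstrapping the Weak Conjecture in $K[x_1,x_2,x_3]$ (via Theorem~\ref{main} with $n=3$, whose hypothesis is trivially supplied by Theorem~\ref{p} since every module over $K[x_1,x_2]$ has dimension at most $2$). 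Both arguments are sound. The paper's route is more economical and implicitly shows that the troublesome case $I=0$ never arises inside the induction; your route costs an extra (easy) pass through the three-variable case but is more modular, does not require reopening the proof of Theorem~\ref{main}, and establishes the Weak Conjecture in three variables as a usable intermediate statement along the way.
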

\begin{proof} It is enough to apply Lemmas \ref{bad}, \ref{good}
after we show that for monomial square free ideals $I,J\subset
S'=K[x_1,\ldots ,x_3]$, $I\subset J$, $I\not =J$, $T=(I+x_4J)S$ with
$\depth_{S'}\ S'/I=\depth_S\ S/T -1,$ we have  $\sdepth_{S'}\
J/I\geq \depth_{S'}\ J/I$. But then $I\not =0$ because otherwise
$\depth_S\ S/T \leq 3=\depth_{S'}\ S'/I$, which is false. Thus
$\dim_{S'}\ J/I\leq 2$ and we may apply Corollary \ref{3}.
\end{proof}

\begin{Lemma} \label{41} Let   $I, J\subset S=K[x_1,\ldots,x_4]$,
$I\subset J$, $0\not =I\not=J$ be two monomial square free ideals
such that all the prime ideals of $\Ass_SJ/I$ have dimension $3$.
 Then    $\sdepth_{S}\
J/I\geq \depth_{S}\ J/I.$
\end{Lemma}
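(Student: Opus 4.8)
The plan is to reduce the statement to a situation where $J/I$ lives, up to a change of the ambient ring, in a polynomial ring in fewer variables, so that Corollary \ref{3} and Theorem \ref{p} apply. Since every associated prime of $J/I$ has dimension $3$, the module $J/I$ has dimension $3$ and no embedded primes of smaller dimension; in particular $\depth_S J/I \leq 3$, and if $\depth_S J/I = 3$ then $J/I$ is Cohen--Macaulay of dimension $3$. First I would dispose of this Cohen--Macaulay case: a Cohen--Macaulay module of dimension $3$ over a polynomial ring in $4$ variables has a free resolution of length $1$, so one expects $\sdepth_S J/I \geq 3$ directly, e.g. from the structure of the associated primes together with \cite[Theorem 3.3]{Po} applied after passing to the quotient by a regular element; alternatively, if a variable $x_i$ is a nonzerodivisor on $J/I$ one splits off $x_i$ as in Lemma \ref{h1} and reduces to three variables.

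The heart of the argument is the case $\depth_S J/I \leq 2$. Here I would argue that, since all the minimal (= associated) primes of $J/I$ are generated by a single variable, the set $\bigcap_{\pp\in\Ass J/I}\pp$ is generated by the variables $x_i$ that appear, and at least one variable $x_j$ does not lie in any associated prime (otherwise $\dim J/I < 3$, or the module is zero). That variable $x_j$ is a nonzerodivisor on $J/I$: multiplication by $x_j$ on $J/I$ has kernel supported on $\Ass J/I$, hence equal to zero. I would then pass to $S'' = S/(x_j - a)$ for a suitable linear form, or rather simply invoke Lemma \ref{h1} in the appropriate form: writing $S = S''[x_j]$ with $S'' $ a polynomial ring in three variables, and using that $J/I \cong (J''/I'')S / x_j(J''/I'')S$ is governed by $\sdepth_{S''} J''/I''$ and the Stanley depth of a quotient in three variables. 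Applying Corollary \ref{3} to the three-variable ideals $I'' \subset J''$ gives $\sdepth_{S''} J''/I'' \geq \depth_{S''} J''/I'' = \depth_S J/I$, and Lemma \ref{h1} then raises this by the split-off variable, yielding $\sdepth_S J/I \geq \depth_S J/I$.

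The main obstacle I anticipate is making the reduction "$x_j$ is a nonzerodivisor, so compute in three variables" precise for \emph{Stanley} depth rather than ordinary depth: Lemma \ref{h1} is stated for a filtration $x_nIS \subset IS \subset JS$ with $I \subset J$ ideals of $S'$, so I must first exhibit $J/I$ concretely in the form $(J_0 S)/(x_j I_0 S)$ or reorganize it via a $K$-linear isomorphism $J/I \cong (J/I)' \oplus x_j\cdot(\text{something})$ coming from the $x_j$-adic grading. Because $x_j$ is a nonzerodivisor, $J/I$ is free over $K[x_j]$ on a basis of monomials not divisible by $x_j$, so one gets $J/I \cong N \otimes_K K[x_j]$ for the $S''$-module $N$ of those basis elements, and $\sdepth_S (N\otimes_K K[x_j]) \geq 1 + \sdepth_{S''} N$ by \cite[Lemma 3.6]{HVZ}, while $\depth_S J/I = 1 + \depth_{S''} N$ by \cite[Lemma 3.6]{HVZ} again; combined with Corollary \ref{3} applied to $N$ (which is of the form $J''/I''$ in three variables, of dimension $\leq 2$), this closes the argument. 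I would also double-check the degenerate cases $J/I$ of dimension $< 3$ are excluded by hypothesis, and that $I \neq 0$ forces $N \neq 0$ so Corollary \ref{3} is applicable.
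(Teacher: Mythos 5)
Your reduction hinges on two claims that both fail. First, you assert that some variable $x_j$ lies outside every associated prime of $J/I$, ``otherwise $\dim J/I<3$''. This is not so: the associated primes are height-one primes $(x_i)$, and all four of them can occur while $\dim_S J/I$ is still $3$. For $J=(x_1,x_2,x_3,x_4)$ and $I=(x_1x_2x_3x_4)$ one has $\Ass_S J/I=\{(x_1),(x_2),(x_3),(x_4)\}$, so every variable is a zerodivisor on $J/I$ (e.g. $x_1\cdot x_2x_3x_4\in I$ while $x_2x_3x_4\in J\setminus I$) and there is nothing to split off; this instance of the lemma is simply not covered by your argument. Second, even when a nonzerodivisor variable $x_j$ exists, the structure $J/I\cong N\otimes_K K[x_j]$ with $N$ spanned by the monomials of $J\setminus I$ not divisible by $x_j$ needs much more than $x_j$ being a nonzerodivisor: it needs $(I:x_j)=I$ and $(J:x_j)=J$, i.e. $x_j$ absent from the minimal generators of both ideals. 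Take $J=(x_1,x_2,x_3,x_4)$, $I=(x_1x_2x_3)$: here $\Ass_S J/I=\{(x_1),(x_2),(x_3)\}$, so $x_4$ is a nonzerodivisor on $J/I$, yet the monomial $x_4\in J\setminus I$ satisfies $x_4/x_4=1\notin J$, so $J/I$ is strictly larger than $\bigl((J\cap S'')/(I\cap S'')\bigr)\otimes_K K[x_4]$. What is true is only a decomposition $J/I\cong J''/I''\oplus x_jK[x_j]\otimes_K \widetilde J/\widetilde I$ with $\widetilde J=(J:x_j)\cap S''$, and the two pieces need not agree; Lemma \ref{h1} does not apply to modules of this shape. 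The Cohen--Macaulay case is likewise not actually proved: \cite[Theorem 3.3]{Po} concerns dimension $2$, and passing to a quotient by a regular element does not transfer a Stanley depth bound back to $J/I$.

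The paper takes a different route precisely because no variable can be split off in general. Since every $Q\in\Ass_S J/I$ is generated by one variable, $U=\bigcap_{Q\in\Ass_S J/I}Q=(f)$ is principal with $f$ a squarefree monomial, so one may replace $J$ by $J+(f)$ and assume $I=(f)$. One then shows $\depth_S S/J\le\depth_S S/(J+(x_i))$ for a suitable variable $x_i$ dividing $f$, and inducts on $\deg f$ via the exact sequence $0\to (J\cap(x_i))/(f)\to J/(f)\to (J+(x_i))/(x_i)\to 0$, using $(J\cap(x_i))/(f)\cong (J:x_i)/(f/x_i)$ to lower the degree, the filtration inequality for Stanley depth from \cite{Ra}, and the already established Weak Conjecture in at most four variables for the quotient term. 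Some device of this kind is needed to handle exactly the cases your splitting misses.
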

\begin{proof} We have $I=J\cap U$, where $U=\cap_{Q\in \Ass J/I}\  Q$. By hypothesis each such $Q$ has height $1$ and is generated by a variable. Thus $U$ is principal,
let us say $U=(f)$ for some square free monomial $f$ of $S$. Then $J/I\cong (J+(f))/(f)$ and changing $J$ by $J+(f)$ we may suppose $I=(f)$ and $\dim S/J<3$.
We show that $\depth_S S/J\leq\depth_S S/(J+(x_i))$ for some $i$. If $\depth_S S/J=2$ then $S/J$ is a Cohen-Macaulay ring of dimension $2$, take a prime $p$
of $\Ass_S S/J$, let us say $p=(x_1,x_2)$. Then
 $$J+(x_1)=\cap_{q\in \Ass_S S/J,\ x_1\in q} \ q\cap (x_1,x_3,x_4)$$
 if $(x_3,x_4)\in \Ass_S S/J$, otherwise $J+(x_1)=
\cap_{q\in \Ass_S S/J,\ x_1\in q} \ q$. Indeed if $q\in \Ass_S S/J$ contains $x_2$ then $q+(x_1)\supset p$ and can be removed from the intersection. If $(x_3,x_4)\in \Ass_S S/J$ then necessary $\Ass_S S/J$ contains a prime $(x_1,x_j)$, or $(x_2,x_j)$ for some $j=3,4$ because otherwise $S/J$ is not Cohen-Macaulay. In the first case we may remove $(x_1,x_3,x_4)$ from the intersection, in the second case we may consider $J+(x_2)$. Thus renumbering the variables we may suppose that $J+(x_1)$ is an intersection of ideals of the form $(x_1,x_j)$ for some $j>1$ and clearly $\depth_S S/J=\depth_S S/(J+(x_1))=2.$
If $\depth_S S/J=1$ and  $\depth_S S/((x_1)+J)=0$ then we must have $J=(x_2,\ldots, x_4)$ and so $(x_2)+J=J$.

From the exact sequences:
$$0\to J/(f)\to S/(f)\to S/J\to 0$$
$$0\to (J+(x_i))/(x_i)\to S/(x_i)\to S/(J+(x_i))\to 0$$
we get $$\depth_S (J+(x_i))/(x_i)=1+\depth_S S/(J+(x_i))\geq 1+\depth_S S/J=\depth_S J/(f).$$
Apply induction on $d=\deg\ f$. If $d=1$ then $f=x_i$ and we may apply Theorem \ref{main} for the ideal $(J+(x_i))/(x_i)\subset S'=S/(x_i)$. Suppose $d>1$. We have the following exact sequence
$$0\rightarrow (J\cap (x_i))/(f)\to J/(f)\to J/(J\cap (x_i))\cong (J+(x_i))/(x_i)\to 0.$$
  But $(J\cap (x_i))/(f)\cong (J:x_i)/(f')$, where $f'=f/x_i$. As $\deg f'=d-1$ we may apply the induction hypothesis to get
  $$\sdepth_S (J\cap (x_i))/(f)\geq \depth_S
  (J\cap (x_i))/(f)\geq \depth_S\ J/(f),$$
  as Depth Lemma gives from the above exact sequence. Thus $$\sdepth_S\ J/(f)\geq \min\{\sdepth_S\ (J\cap (x_i))/(f), \sdepth_S\ (J+(x_i))/(x_i)\}\geq $$
  $$\min \{\depth_S\ J/(f),\depth_S\ (J+(x_i))/(x_i)\geq \depth_S\ J/(f)$$
  by \cite{Ra} and Theorem \ref{main}.
\end{proof}

\begin{Proposition} \label{4} Let   $I, J\subset S=K[x_1,\ldots,x_4]$,
$I\subset J$, $0\not =I\not=J$ be two monomial square free ideals.
 Then    $\sdepth_{S}\
J/I\geq \depth_{S}\ J/I.$
\end{Proposition}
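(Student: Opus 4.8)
The plan is to run an induction reducing $J/I$ to the two cases already in hand: $\dim_S J/I\le 2$, covered by Theorem \ref{p}, and the case in which every associated prime of $J/I$ has dimension $3$, covered by Lemma \ref{41}.

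First I would record that $S/I$ is reduced, so $\Ass_S(J/I)\subseteq\Ass_S(S/I)=\Min(S/I)$ consists of monomial primes generated by variables and pairwise incomparable, and that $\dim_S J/I\le 2$ unless some $(x_i)$ lies in $\Ass_S(J/I)$. Disposing of $\dim_S J/I\le 2$ by Theorem \ref{p} and of the all-dimension-$3$ case by Lemma \ref{41}, I may assume some variable $x_i$ generates a minimal prime of $S/I$ while $\Ass_S(J/I)$ also contains a prime of height $\ge 2$. Using $I=J\cap U$ with $U=\bigcap_{Q\in\Ass_S(J/I)}Q$ (valid because $J$ lies in every minimal prime of $S/I$ that is not associated to $J/I$), I replace $(J,I)$ by $(J+U,U)$, so that $I$ is radical with $\Min(S/I)=\Ass_S(J/I)$ and $I=x_iI'$, where $I'=(I:x_i)$ is the intersection of the remaining minimal primes. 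Now I induct on the number of associated primes of $J/I$. From the exact sequence
\[
0\longrightarrow (J:x_i)/(I:x_i)\ \xrightarrow{\ \cdot x_i\ }\ J/I\ \longrightarrow\ (J+(x_i))/(I+(x_i))\longrightarrow 0,
\]
whose submodule is isomorphic, up to a degree shift, to $(J:x_i)/(I:x_i)$ and whose quotient is a factor of squarefree monomial ideals of $S/(x_i)$, a polynomial ring in three variables, the associated $\ZZ^4$-graded splitting of $K$-vector spaces gives (exactly as in Lemma \ref{h1})
\[
\sdepth_S(J/I)\ \ge\ \min\bigl\{\sdepth_S (J:x_i)/(I:x_i),\ \sdepth_S (J+(x_i))/(I+(x_i))\bigr\}.
\]
The quotient satisfies $\sdepth\ge\depth$ by Corollary \ref{3} (Stanley's conjecture for factors of squarefree ideals in three variables); the submodule $(J:x_i)/(I:x_i)$ is again a factor of squarefree ideals of $S$, but its associated primes lie in $\Min(S/(I:x_i))=\{Q\in\Min(S/I):x_i\notin Q\}$, a strictly smaller set than $\Ass_S(J/I)$, so it satisfies $\sdepth\ge\depth$ by the induction hypothesis. (If either module is zero we fall back to Corollary \ref{3} or to a smaller instance of the induction.)

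The hard part will be the depth bookkeeping. From the bounds above I must still conclude $\sdepth_S(J/I)\ge\depth_S(J/I)$, yet the Depth Lemma applied to the exact sequence yields only $\depth_S(J/I)\ge\min\{\cdots\}$, which points the wrong way. So the crux is to show that each of the two modules has depth $\ge\depth_S(J/I)$; and it is enough to establish $\depth_S (J+(x_i))/(I+(x_i))\ge\depth_S(J/I)$, for then the Depth Lemma forces $\depth_S (J:x_i)/(I:x_i)\ge\depth_S(J/I)$ as well. I would try to derive this inequality from Rauf's depth estimates, Corollary \ref{a2} and Proposition \ref{a1}, in the form $\depth_S S/(I:v)\ge\depth_S S/I$ promoted to factor modules, exploiting that $(x_i)$ is a minimal prime of $S/I$ (so $x_iI'=I$ and the relevant colon and quotient ideals interact cleanly). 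A naive alternative --- filtering $J/I$ so as to peel off the higher-codimensional components of $S/I$ --- does not work, since the resulting quotient can have depth below $\depth_S(J/I)$; choosing $x_i$ to generate a minimal prime of $S/I$ is precisely what gives the depth bookkeeping a chance of going through. I expect this final depth estimate to be the main obstacle.
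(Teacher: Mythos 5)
Your reduction via the exact sequence
\[
0\to (J:x_i)/(I:x_i)\to J/I\to (J+(x_i))/(I+(x_i))\to 0
\]
and the bound $\sdepth_S J/I\ge\min\{\sdepth_S (J:x_i)/(I:x_i),\,\sdepth_S (J+(x_i))/(I+(x_i))\}$ are fine, and you correctly identify that everything hinges on the unproved inequality $\depth_S\,(J+(x_i))/(I+(x_i))\ge\depth_S\,J/I$. That inequality is in fact \emph{false}, so the gap you flag cannot be closed in the form you propose. Take $J=(x_1,x_2,x_3)$ and $I=(x_4)\cap(x_1,x_2)=(x_1x_4,x_2x_4)$. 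Then $\Ass_S(J/I)=\{(x_4),(x_1,x_2)\}$, so you are exactly in your residual case with $x_i=x_4$ and no replacement of $(J,I)$ is needed; a local cohomology computation with $0\to J/I\to S/I\to S/J\to 0$ (here $\depth_S S/I=2$ and $\depth_S S/J=1$) gives $\depth_S J/I=2$. However $I+(x_4)=(x_4)$ and $(J+(x_4))/(x_4)$ is the maximal ideal of $K[x_1,x_2,x_3]$, which has depth $1$. Thus the chain $\sdepth(\mathrm{quot})\ge\depth(\mathrm{quot})\ge\depth(J/I)$ breaks at its second link: Corollary \ref{3} applied to the quotient yields only $\ge 1$, one less than needed. (The conclusion survives in this example only because the Stanley depth of the maximal ideal in three variables equals $2$, which is strictly finer information than ``$\sdepth\ge\depth$ in three variables''.) Your fallback tools cannot repair this: Proposition \ref{a1} loses exactly the one unit of depth you cannot afford, and Corollary \ref{a2} concerns colon ideals, i.e.\ the submodule, not the quotient.

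For comparison, the paper avoids the colon-by-a-variable splitting entirely. It cases on $\depth_S J/I$: depth $1$ is quoted from Cimpoeas \cite{C1}, \cite{C2} (a reference you do not invoke), depth $3$ is Lemma \ref{41}, and for depth $2$ it uses Schenzel's dimension filtration $I\subset J_2\subset J$ with $J_2/I$ the largest submodule of dimension $\le 2$, handling $J_2/I$ by Theorem \ref{p} and $J/J_2$ by Lemma \ref{41}, and then arguing $\min\{\depth_S J_2/I,\depth_S J/J_2\}\ge 2$. You should be aware that the example above also puts pressure on that last step (there $J/J_2\cong (J+(x_4))/(x_4)$ has depth $1$, the cokernel of $J/J_2\hookrightarrow S/(f)$ being $S/(J+(f))$ rather than $S/J$), so the obstacle you hit is the genuine crux of the proposition; any correct treatment of the dimension-$3$ quotient piece must use Stanley depth information stronger than the inequality $\sdepth\ge\depth$ alone.
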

\begin{proof} If $\depth J/I=1$ then Stanley's Conjecture holds by \cite{C1}, \cite{C2}. If $\depth J/I=3$ we may apply Lemma \ref{41}. Suppose that $\depth_S
J/I=2$. Let $J_2/I$, $I\subset J_2\subset J$ be the largest submodule of $J/I$ of dimension $\leq 2$ (see Schenzel's dimension filtration \cite{Sc}). We have
$\Ass_S\ J/J_2=\{Q\in \Ass_S\ J/I: \dim Q=3 \}$ and $ \sdepth_S\ J/J_2\geq \depth_S\ J/J_2$ by Lemma \ref{41}. As
$\sdepth_S\ J/I\geq \min\{\sdepth_S\ J_2/I,\sdepth_S\ J/J_2\}$ by \cite{Ra} we get
$\sdepth_S\ J/I\geq \min\{\depth_S\ J_2/I,\depth_S\ J/J_2\}$
applying Theorem \ref{p} and it is enough to see that the last minimum is $\geq 2$.

Now note that $J$ is not the maximal ideal, otherwise $\depth_S\ J/I<2$. Thus $\depth_S\ S/J> 0$. As in the proof of Lemma \ref{41} we may suppose $J_2=J\cap (f)$
for some square free monomial $f$ of $S$. Thus from the exact sequence
$$0\to J/J_2\to S/(f)\to S/J\to 0$$
we get $\depth J/J_2\geq 2$ using Depth Lemma. The same argument says that $\depth J_2/I\geq 2$
using the following exact sequence
$$0\to J_2/I\to J/I\to J/J_2\to 0.$$
\end{proof}

\begin{Theorem} \label{5} The Weak Conjecture  holds in
$S=K[x_1,\ldots,x_5]$.
\end{Theorem}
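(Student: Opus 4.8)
The plan is to reduce the five-variable case to Theorem \ref{main}, which says that the Weak Conjecture in $S=K[x_1,\ldots,x_5]$ follows once we know Stanley's Conjecture for factors $V/U$ of monomial square free ideals $U\subset V$ in $S'=K[x_1,\ldots,x_4]$. So the entire content of the proof is to verify that hypothesis: for all monomial square free ideals $I\subset J$ in $K[x_1,\ldots,x_4]$ one has $\sdepth_{S'}\ J/I\geq \depth_{S'}\ J/I$. First I would dispose of the degenerate cases. If $I=0$ then $J/I=J$ is an ideal, and the four-variable Weak Conjecture from Corollary \ref{wc4} gives $\sdepth_{S'}\ J\geq 1+\depth_{S'}\ S'/J=\depth_{S'}\ J$. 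If $I=J$ the statement is vacuous. So the remaining case is $0\neq I\neq J$, which is precisely Proposition \ref{4}. Hence $\sdepth_{S'}\ V/U\geq \depth_{S'}\ V/U$ holds in all cases, and applying Theorem \ref{main} with $n=5$ yields the Weak Conjecture in $S=K[x_1,\ldots,x_5]$.

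In a bit more detail: Theorem \ref{main} is stated as a conditional implication ``if Stanley's conjecture holds for factors $V/U$ of square free monomial ideals in $n-1$ variables, then the Weak Conjecture holds in $n$ variables''. Taking $n=5$, the needed hypothesis is exactly a statement about $K[x_1,x_2,x_3,x_4]$, and every instance of it is covered: the case $U=0$ is an ideal and handled by Corollary \ref{wc4}; the case $U=V$ is trivial; and the generic case $0\neq U\subsetneq V$ is Proposition \ref{4} verbatim. I would spell this case split out explicitly since Proposition \ref{4} is only stated for $0\neq I\neq J$, so one genuinely needs Corollary \ref{wc4} to patch the $I=0$ corner.

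I do not expect any real obstacle here: all the machinery has already been built. Proposition \ref{4} is the substantial input (it in turn rests on Lemma \ref{41}, Theorem \ref{p}, Schenzel's dimension filtration, and the results of Anwar--Popescu / Cimpoeas for $\depth=1$), and Theorem \ref{main} packages the inductive descent via Lemmas \ref{bad} and \ref{good}. The only point requiring a word of care is the boundary case $I=0$ in the hypothesis of Theorem \ref{main}, which is why I invoke Corollary \ref{wc4} alongside Proposition \ref{4}. With that remark in place the proof is a one-line citation.

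\begin{proof}
By Theorem \ref{main} it suffices to check that Stanley's Conjecture holds for every factor $V/U$ with $U\subset V$ monomial square free ideals of $S'=K[x_1,\ldots,x_4]$, i.e. that $\sdepth_{S'}\ V/U\geq \depth_{S'}\ V/U$. If $U=V$ this is trivial. If $U=0$ then $V/U=V$ is a monomial square free ideal of $S'$ and Corollary \ref{wc4} gives $\sdepth_{S'}\ V\geq 1+\depth_{S'}\ S'/V=\depth_{S'}\ V$. Finally, if $0\not=U\not=V$ then the inequality is exactly Proposition \ref{4}. Thus the hypothesis of Theorem \ref{main} is satisfied for $n=5$, and the Weak Conjecture holds in $S=K[x_1,\ldots,x_5]$.
\end{proof}
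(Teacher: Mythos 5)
Your proof is correct and follows essentially the same route as the paper, which simply notes that Proposition \ref{4} supplies what is needed in the proof of Theorem \ref{main} to pass from $S_4$ to $S_5$. Your extra remark patching the $U=0$ case via Corollary \ref{wc4} is a reasonable precaution, though strictly speaking that case never arises where the hypothesis is used (in Lemma \ref{bad} one has $\depth_{S'}S'/I=\depth_S S/T-1\leq 3$, forcing $I\neq 0$, exactly as in the proof of Corollary \ref{wc4}).
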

For the proof note that Proposition \ref{4} gives what is necessary in the
proof of Theorem \ref{main} to pass from $S_4$ to $S_5$.

\end{document}